
\documentclass[10.5pt]{elsarticle}
\usepackage[margin=1in]{geometry}

\usepackage{nicematrix}
\usepackage[new]{old-arrows}

\usepackage[T1]{fontenc}
\usepackage{amsmath,amssymb,latexsym,mathrsfs}
\usepackage{mathabx} 
\usepackage{xcolor}
\usepackage[all,color,cmtip]{xy}
\usepackage{rotating}
\usepackage{amsthm}  
\usepackage{mathtools}
\usepackage{enumitem}
\usepackage{titlesec}
\usepackage{tikz}
\biboptions{sort&compress}
\usepackage{blkarray}
\usepackage{hyperref} 

\usepackage{csquotes}

\usepackage{xcolor}

\usepackage{bbm}

\titleformat{\section}
  {\normalfont\fontsize{12}{15}\bfseries}{\thesection}{1em}{}

\titleformat{\subsection}
  {\normalfont\fontsize{11}{15}\bfseries}{\thesubsection.}{0.5em}{}


\def\wh#1{\widehat{#1}}
\def\wt#1{\widetilde{#1}}
\def\TO#1{\overrightarrow{#1}}
\def\OT#1{\overleftarrow{#1}}

\newcommand{\Hom}{\mathrm{Hom}}

\newcommand{\Ext}{\mathrm{Ext}}
\newcommand{\rad}{\mathrm{rad}}
\newcommand{\soc}{\mathrm{soc}}

\newcommand{\End}{\mathrm{End}}
\newcommand{\dimk}{\mathrm{dim}}

\newcommand{\Mimod}{\mathrm{mod}}
\newcommand{\SlMimod}{\mathrm{\text{--}mod}}

\newcommand{\Ker}{\mathrm{Ker} }

\newcommand{\Img}{\mathrm{Im} }
\newcommand{\add}{\mathrm{add}}

\newcommand{\tr}{\mathrm{tr}}
\newcommand{\Tr}{\mathrm{Tr}}

\newcommand{\Hasse}{\mathrm{H}}

\newcommand{\Orb}{\mathrm{orb}}

\newcommand{\sou}{\mathrm{s}}
\newcommand{\tar}{\mathrm{t}}
\newcommand{\poset}{\mathcal{P}}

\newcommand{\Ind}{\mathrm{Ind}}
\newcommand{\Cnd}{\mathrm{CoInd}}
\newcommand{\Id}{\mathrm{Id}}
\newcommand{\Mat}{\mathbb{M}}
\newcommand{\bas}{\mathrm{e}}
\newcommand{\unit}{\mathrm{1}}
\newcommand{\ones}{\mathbbm{1}}

\newcommand{\Quiver}{\mathrm{Q}}
\newcommand{\Paths}{\mathrm{Paths}}
\newcommand{\Card}{\mathrm{Card}}

\newcommand{\res}{\mathrm{res}}

\def\eps{\mathcal{E}}

\newcommand\mathmiddlescript[1]{\vcenter{\hbox{$\scriptstyle #1$}}}

\newtheorem{lemma}{Lemma}[section]{\bf}{\it}
{\bf}{\it}
\newtheorem{corollary}[lemma]{Corollary}{\bf}{\it}
\newtheorem{theorem}[lemma]{Theorem}{\bf}{\it}
{\bf}{\it}
{\bf}{\it}
{\bf}{\it}
{\bf}{\it}
{\bf}{\it}
\theoremstyle{definition}
\newtheorem{remark}[lemma]{Remark}{\bf}{\rm}
{\bf}{\rm}
\newtheorem{definition}[lemma]{Definition}{\bf}{\rm}
{\bf}{\rm}
{\bf}{\rm}

\theoremstyle{remark}

\begin{document}
\title{Burt-Butler algebras of the bocs associated to a finite partially ordered set}
\author{Raymundo Bautista\fnref{CCM}}
\ead{raymundo@matmor.unam.mx}

\author{Jes\'us Arturo Jim\'enez Gonz\'alez\fnref{UNAM,cora}}
\ead{jejim@im.unam.mx}

\fntext[CCM]{Centro de Ciencias Matem\'aticas UNAM Campus Morelia C.P.~58089,  Mexico}
\fntext[UNAM]{Instituto de Matem\'aticas UNAM Ciudad~Universitaria C.P.~04510, Mexico.}

\fntext[cora]{Corresponding author}

\begin{abstract}
Given an algebra $A$ and an $A-A$-bimodule $U$ with co-algebra structure, a bocs, the algebras of endomorphisms of $A$ as left or right module of the bocs are known as Burt-Butler algebras (up to an appropriate opposite). Here we give a description of these algebras for the bocs associated to a finite partially ordered set in terms of incidence algebras and their balanced versions. We also exhibit their quasi-hereditary structure, provide bound quiver presentations for their Ringel duals, describe the embedding of $A$ as exact Borel subalgebra and characterize the corresponding subcategories of induced and co-induced modules.
\end{abstract}

\begin{keyword}
Representations of posets \sep normal bocs \sep Burt-Butler algebras \sep incidence algebra \sep quasi-hereditary algebra \sep Ringel dual  \sep exact Borel subalgebra \sep induced and co-induced modules 
\MSC[2020] primary: 16G20 
secondary: 16E45 \sep 16T15 \sep 17B10 \sep 16W70
\end{keyword}

\maketitle


\section*{Introduction}\label{S(I)}

Representations of partially ordered sets (posets) have been an important part in the development of representation theory since their introduction by Nazarova-Roiter~\cite{NR}, mainly for their role concerning the second Brauer-Thrall conjecture for perfect fields~\cite{NR75,cmR80,zZ05}, and in relation to the representation type of quivers and algebras, see for instance~\cite{pG72,jaD74,GNRSV93}. Matrix representation of posets, as a particular class of matrix problems, can be systematized in the categorical framework of bocses and their representations introduced by Roiter in~\cite{avR80}.  In many relevant cases the category of modules of a bocs is equivalent to certain subcategories of (\emph{induced} or \emph{co-induced}) modules of the right and left Burt-Butler algebras of the bocs, studied in~\cite{BB90}, a relation exploited in the work of Koenig-K\"ulshammer-Ovsienko~\cite{KKO14} on quasi-hereditary algebras and their exact Borel subalgebras. One of the central results in~\cite{KKO14} establishes the right and left Burt-Butler algebras of a \emph{directed} bocs as quasi-hereditary algebras, Ringel dual of each other up to Morita equivalence.

For an arbitrary field $K$, here we compute the right $\mathcal{R}$ and left $\mathcal{L}$ Burt-Butler algebras of the bocs $(A,U)$ associated to a finite poset $\poset$, providing isomorphisms
\[
\mathcal{R} \cong \begin{pmatrix} K\poset & 0 \\ [K, \ldots, K]&K \end{pmatrix}  \longhookleftarrow  \begin{pmatrix} R' & 0 \\ [K, \ldots, K]&K \end{pmatrix}
 \cong A \cong 
 \begin{pmatrix} R' & 0 \\ R'& K\Id \end{pmatrix}  \longhookrightarrow  \begin{pmatrix} K\poset & K^{\ones}_0\poset \\ K\poset&K^{\ones}\poset \end{pmatrix} \cong \mathcal{L},
\]
where $K\poset$ is the incidence algebra of $\poset$, $R'$ is its diagonal subalgebra and $K^{\ones}\poset:=\{M \in K\poset \mid M\ones \in K\ones\}$ with $\ones:=[1,\ldots,1]^{\tr}$ (resp. $K^{\ones}\poset_0:=\{M \in K\poset \mid M\ones=0\}$) is its row-balanced subalgebra (resp. row-centralized subspace), see Theorems~\ref{T:RBB} and \ref{T:LBB}. Our computations rely on faithful actions of $\mathcal{R}$ and $\mathcal{L}$ on the base algebra $A$, generated by appropriate grouplike elements of the bocs. Moreover, since the bocs $(A,U)$ is directed (Lemma~\ref{P:directed}), the algebras $\mathcal{R}$ and $\mathcal{L}$ have quasi-hereditary structures (Table~\ref{Tb:QHstructure}) that we further analyze as follows:

\begin{itemize}
 \item In Remark~\ref{R:basic} we describe the Ringel dual of $\mathcal{R}$, that is, a basic algebra $\mathcal{L}'$ Morita equivalent to $\mathcal{L}$ (see Corollary~\ref{C:Ringel} and Definition~\ref{D:LBQ} for a bound quiver presentation of $\mathcal{L}'$).
 \item In Corollary~\ref{C:Borel} we describe the base algebra $A$ as exact and co-exact Borel subalgebra of $\mathcal{R}$ and $\mathcal{L}$ respectively, as indicated in the isomorphisms above.
 \item Since the incidence algebra $K\poset$ is contained in both the right and left algebras $\mathcal{R}$, $\mathcal{L}$ (see isomorphisms above), in Theorem~\ref{T:COinduced} we characterize the induced and co-induced modules from $A$ as those whose restriction to $K\poset\SlMimod$ is a projective or injective module, respectively.
\end{itemize}

Note that the right algebra $\mathcal{R}$ is isomorphic to the incidence algebra $K\poset^*$ of the (upper) suspension of $\poset$, which has been extensively studied in relation to representations of posets, see~\cite{dS92book} and references therein. For instance, induced modules are called \emph{prinjective} in~\cite[11.6]{dS92book}, and Theorem~\ref{T:COinduced}(a) establishes directly the well-known equivalence between the full subcategory of $K\poset^*\SlMimod$ determined by prinjective modules and the additive category of matrix representations of $\poset$ (see~\cite[Proposition~11.36]{dS92book}). Our presentation of co-induced modules as model for such category, Theorem~\ref{T:COinduced}(b), seems to be new. Section~\ref{N} collects preliminary notions and results, in particular related to row-balanced incidence algebras, and section~\ref{S(X)} contains bound quiver presentations and examples.

\section{Preliminary notions} \label{N}

For integers $m,n \geq 1$, the set of $m \times n$ matrices with coefficients in an arbitrary field $K$ is denoted by $\Mat_{m \times n}(K)$, and simply by $\Mat_n(K)$ if $m=n$. We consider the usual $K$-basis $\{E_{ji}\}$ of $\Mat_{m \times n}(K)$, and denote by $\{M_{ji}\}$ the corresponding coefficients of a matrix $M$. The linear span of a set of vectors $X$ in a $K$-vector space is denoted by $\langle X \rangle$. Quivers are denoted by $(Q_0,Q_1,\sou,\tar)$ where $\sou,\tar:Q_1 \to Q_0$ are the source and target functions, and arrows are composed from right to left.

\subsection{Bocses and their Burt-Butler algebras} \label{N1}

A \emph{bocs} (or \emph{bimodule over a category with co-algebra structure}) $\mathcal{B}=(A,U)$ consists of a $K$-algebra $A$ and an $A-A$-bimodule $U$ together with morphisms of $A-A$-bimodules $\mu:U \to U \otimes_A U$ and $\epsilon:U \to A$ such that the following diagrams are commutative,
\[
\xymatrix{U \otimes_A U \ar[d]_-{\unit \otimes \epsilon} & U \ar@{=}[d] \ar[r]^-{\mu} \ar[l]_-{\mu} & U \otimes_A U \ar[d]^-{\epsilon \otimes \unit} \\
U \otimes_A A \ar[r]^-{\cong} & U  & A \otimes_A U \ar[l]_-{\cong}} \qquad \text{and} \qquad
\xymatrix{ U \ar@{=}[d] \ar[r]^-{\mu} & U \otimes_A U \ar[r]^-{\unit \otimes \mu} & U \otimes_A (U \otimes_A U) \ar[d]^-{\cong} \\
 U \ar[r]^-{\mu} & U \otimes_A U \ar[r]^-{\mu \otimes \unit} & (U \otimes_A U) \otimes_A U. }
\]
The morphisms $\mu$ and $\epsilon$ are called \emph{comultiplication} and \emph{counit} of $\mathcal{B}$ respectively. The category of left modules $\mathcal{B}\SlMimod$ of the bocs $\mathcal{B}$ has as objects the finite dimensional left $A$-modules. For two modules $M,N$ the set of morphisms $\Hom_{\mathcal{B}}(M,N)$ is $\Hom_A({}_AU\otimes_A M,{}_AN)$ and the composition of $f:U\otimes_A M \to N$ and $g:U\otimes N \to L$ is given by the composition of morphisms
\begin{equation*} \label{Eq:compositionL}
 \xymatrix{U \otimes_A M \ar[r]^-{\mu \otimes \unit} & U \otimes_A U \otimes_A M \ar[r]^-{\unit \otimes f} 
& U \otimes_A N \ar[r]^-{g} & L }.
\end{equation*}
The identity of $M$ in $\mathcal{B}\SlMimod$ is given by the composition of $\epsilon \otimes \unit_M$ with the canonical isomorphism $A \otimes_A M \cong M$. Similarly, the objects of the category $\Mimod \text{-}\mathcal{B}$ of right $\mathcal{B}$-modules are the finite dimensional right $A$-modules, morphisms are given by $\Hom_{\mathcal{B}}(M,N):=\Hom_A(M\otimes_AU_A,N_A)$ and the composition of $\varphi:M \otimes_AU \to N$ and $\psi:N \otimes_A U \to L$ is given by
\begin{equation*} \label{Eq:compositionR}
 \xymatrix{M \otimes_A U \ar[r]^-{\unit \otimes \mu} & M \otimes_A U \otimes_A U \ar[r]^-{\varphi \otimes \unit} & N \otimes_A U \ar[r]^-{\psi} & L }.
\end{equation*}
The identity of $M_A$ in $\Mimod \text{-}\mathcal{B}$ is the composition of $\unit_M \otimes \epsilon$ with the canonical isomorphism $M \otimes_A A \cong M$.

The \emph{right and left Burt-Butler algebras} associated to a bocs $\mathcal{B}=(A,U)$ are given by 
\[
\mathcal{R}:=\End_{\mathcal{B}}({}_AA)^{op} \cong \Hom_A({}_AU,{}_AA)^{op} \qquad \text{and} \qquad \mathcal{L}:=\End_{\mathcal{B}}(A_A) \cong \Hom_A(U_A,A_A),
\]
cf.~\cite{KKO14}. Using the formulas of the composition, the product for elements $f,g$ in $\mathcal{R}$ and $\varphi,\psi$ in $\mathcal{L}$ is given by
\begin{eqnarray} \label{Eq:1}
(fg)(u)=\sum_ig(u_i^1f(u_i^2))\qquad\text{and}\qquad  (\psi \varphi )(u)=\sum_i\psi(\varphi(u_i^1)u_i^2), 
\end{eqnarray}
for an element $u \in U$ with $\mu(u)=\sum_iu_i^1 \otimes u_i^2$. Moreover, the counit $\epsilon$ is the identity element of both algebras $\mathcal{R}$ and $\mathcal{L}$. Applying the contravariant functor $\Hom_A(-,{}_AA)$ to the counit $\epsilon:{}_AU \to {}_AA$ we get a linear transformation
\[
A^{op} \cong \xymatrix@R=4pt{\Hom_A({}_AA,{}_AA) \ar[rr]^-{\Hom(\epsilon,{}_AA)} && \Hom_A({}_AU,{}_AA) \\ f_z:=[a \mapsto az] \ar@{|->}[rr] && f_z \circ \epsilon = [u \mapsto \epsilon(u)z] } \cong \mathcal{R}^{op},
\]
and under the given isomorphisms we write $\TO{z}:=(f_z \circ \epsilon) \in \mathcal{R}$. Observe that this transformation yields a morphism of $K$-algebras $A \to \mathcal{R}$, since $\TO{\unit}=\epsilon$ and
\[
(\TO{z_1}\, \TO{z_2})(u)=\sum_i\TO{z_2}(u^1_i\TO{z_1}(u^2_i))=\sum_i \epsilon(u^1_i\epsilon(e^2_i))z_1z_2=\epsilon(u)z_1z_2=\TO{z_1z_2}(u).
\] 
Similarly, the contravariant functor $\Hom_A(-,A_A)$ applied to the counit $\epsilon:U_A \to A_A$ yields a morphism of $K$-algebras
\[
A \cong \xymatrix@R=4pt{\Hom_A(A_A,A_A) \ar[rr]^-{\Hom(\epsilon,A_A)} && \Hom_A(U_A,A_A) \\ g_z:=[a \mapsto za] \ar@{|->}[rr] && g_z \circ \epsilon = [u \mapsto z\epsilon(u)] } \cong \mathcal{L},
\]
that we denote by $z \mapsto \OT{z}:=g_z \circ \epsilon$. Note that if the counit $\epsilon$ is surjective then $\Hom(\epsilon,{}_AA)$ and $\Hom(\epsilon,A_A)$ are monomorphisms. In this case we denote by $\TO{A}$ and $\OT{A}$ the subalgebras of $\mathcal{R}$ and $\mathcal{L}$ isomorphic to $A$ under these monomorphisms, respectively.

\subsection{Grouplike elements and ditalgebras} \label{Nmedio}

A bocs $(A,U)$ is called \emph{normal} if there is an element $\omega \in U$ satisfying $\mu(\omega)=\omega \otimes \omega$ and $\epsilon(\omega)=1$, called a \emph{grouplike} element of $(A,U)$, cf.~\cite[Definition~3.1]{BSZ} and~\cite[\S 7]{KKO14}. In this case the counit $\epsilon$ is necessarily surjective, and $\TO{z}(\omega)=z=\OT{z}(\omega)$ for any $z \in A$. Each grouplike element $\omega$ gives rise to a right action of $\mathcal{R}$ on $A$ by taking
\[
z \cdot f:=(\TO{z}f)(\omega)=f(\omega z), \qquad \text{for $z \in A$ and $f \in \mathcal{R}$.}
\]
Indeed, since $\mu(\omega z)=\omega \otimes \omega z$ we have $(z \cdot f_1)\cdot f_2=f_2(\omega f_1(\omega z))=(f_1f_2)(\omega z)=z \cdot (f_1f_2)$ using~(\ref{Eq:1}). Moreover, this action extends the natural right action of $A$ on $A_A$, in the sense that $z_0 \cdot \TO{z}=(\TO{z_0}\,\TO{z})(\omega)=\TO{z_0z}(\omega)=z_0z$. Similarly, each grouplike element $\omega$ gives rise to a left action of $\mathcal{L}$ on $A$ that extends the natural action of $A$ on ${}_AA$, by taking
\[
g \cdot z:=(g\OT{z})(\omega)=g(z\omega ), \qquad \text{for $z \in A$ and $g \in \mathcal{L}$.}
\]
We call these the right and left \emph{$\omega$-action} of $\mathcal{R}$ and $\mathcal{L}$ on $A$, respectively.

Normal bocses can be conveniently constructed and studied from a ring theoretical setting through tensor algebras and differentials, cf.~\cite[\S 3]{BSZ}. Recall that a \emph{ditalgebra} is a pair $(T,\delta)$ consisting of:
\begin{enumerate}[label={\textnormal{(\alph*)}},topsep=3px,parsep=0px]
 \item A positively graded $K$-algebra $T=\bigoplus_{i \geq 0}T_i$ that is freely generated by the pair $(T_0,T_1)$. In other words, $T$ is isomorphic to the tensor algebra $T_{T_0}(T_1)$ and with this isomorphism the grading of $T$ coincides with the tensor product grading of $T_{T_0}(T_1)$.  
 \item A differential $\delta$ on $T$ with $\delta^2=0$ (here a differential means a homogeneous linear transformation $\delta:T \to T$ of degree one satisfying the Leibniz rule,
\[
\delta(ab)=\delta(a)b+(-1)^{|a|}a\delta(b), \qquad \text{for any homogeneous vectors $a,b \in T$, where $|a|:=i$ if $a \in T_i$).}
\]  
\end{enumerate}
The bocs associated to a ditalgebra $(T,\delta)$ is given as follows: take $A:=T_0$, $V:=T_1$, and consider the vector space $U:=\omega A \oplus V$ for an external symbol $\omega$, with $A-A$-bimodule structure given for $a \in A$ and $\omega b+v \in U$ by the formulas
\begin{equation} 
 a(\omega b+v)=\omega ab +\delta(a)b +av,  \qquad \text{and} \qquad 
(\omega b+v)a=\omega ba +va. 
\end{equation}
The counit $\epsilon:U \to A$ and comultiplication $\mu:U \to U \otimes U$ are given by
\begin{equation} 
\epsilon(\omega b+v)=b, \qquad \text{and} \qquad \mu(\omega b+v)=\omega \otimes \omega b+\delta(v)+\omega \otimes v+v\otimes \omega.  
\end{equation}
Then $(A,U)$ is a normal bocs with grouplike element $\omega$, and the counit $\epsilon$ is surjective with kernel $V$, see~\cite[Lemma~3.3]{BSZ}.  That every normal bocs can be found in this way is well-known, see for instance~\cite[\S 3]{BSZ}.

\subsection{Incidence algebras} \label{N3}

Let ${\poset}=({\poset},\preceq)$ be a finite partially ordered set (\emph{poset}). Here we will always assume that $\poset=\{1,\ldots,n\}$ for some $n \geq 1$, and that $i \preceq j$ implies $i\leq j$. We say that $j$ is an immediate successor of $i$ if $i \prec j$ and there is no $k \in \poset$ with $i \prec k \prec j$, denoted by $i \to j$  The \emph{incidence algebra} $K\poset$ of $\poset$ is given by
\begin{equation}\label{Eq:incidence}
K\poset:=\{ M \in \Mat_n(K) \mid \text{$M_{ji} \neq 0$ implies $i\preceq j$ in $\poset$} \}, \qquad
K\poset=R' \oplus \rad(K\poset),
\end{equation}
where $R'$ is the set of $n \times n$ diagonal matrices, cf.~\cite[(2.7)]{dS92book}. A basis of $K\poset$ is given by the set $\{E_{ji} \mid \text{$i\preceq j$}\}$. 

Denote by $\ones_n$ the vector $[1,1,\ldots,1,1]^{\tr} \in K^n$ (or simply $\ones$ for appropriate size) and consider the following $K$-vector subspaces of $K\poset$,
\begin{equation}\label{D:IncOnes}
K^{\ones}\poset:=\{ M \in K\poset \mid M\ones \in K\ones\} \qquad \text{and} \qquad
K^{\ones}_0\poset:=\{ M \in K\poset \mid M\ones=0\}.
\end{equation}
Clearly, $K^{\ones}\poset$ is a $K$-subalgebra of $K\poset$ and $K^{\ones}_0\poset$ is a $K\poset-K^{\ones}\poset$-bimodule, that we call \emph{row-balanced incidence algebra} and \emph{row-centralized subspace}, respectively. A $K$-basis of $K^{\ones}_0\poset$ is given by the set $\{E_{jj}-E_{ji} \mid \text{$i \prec j$ in $\poset$}\}$, and clearly we have $K^{\ones}\poset=K\Id_n \oplus K^{\ones}_0\poset$. To describe the radical $\rad(K^{\ones}\poset)$ we need to fix a function $m: \poset \to \min(\poset)$ such that $m(j) \in \min(\poset,j):=\{i \in \min(\poset) \mid i \preceq j\}$, called a \emph{minimal marking} of $\poset$. Take $\poset':=\poset-\min(\poset)$ and for $i\prec j$ consider the matrices
\begin{equation} \label{No:Idem}
\eps_j:=E_{jj}-E_{jm(j)},  \qquad  \eps_0:=\Id-\sum_{j \in \poset'}\eps_j \qquad \text{and} \qquad  B_{ji}:=E_{ji}-E_{jm(j)}.
\end{equation}
It is easy to see that the set $\{\eps_0,\eps_j \mid j \in \poset'\}$ is a \emph{complete set of orthogonal primitive idempotents} of $K^{\ones}\poset$. Note also that $B_{ji}=0$ if and only if $i=m(j)$, and that $B_{ji} \in \rad(K^{\ones}\poset)$ for any $i\prec j$ (since these are nilpotent matrices). This shows that
\begin{equation}\label{Eq:2}
K^{\ones}\poset=R^{\ones} \oplus \rad(K^{\ones}\poset), \qquad \text{where} \quad R^{\ones}=\bigoplus_{j \in \poset' \cup \{0\}}K\eps_j \quad \text{ and } \quad \rad(K^{\ones}\poset)=\bigoplus_{\substack{i\prec j \\ i \neq m(j)}}KB_{ji}.
\end{equation}

\section{Computation of the Burt-Butler algebras associated to a finite poset} \label{B}

The plan of this section is to find appropriate faithful actions of the Burt-Butler algebras on the base algebra $A$. With help of a given $K$-basis of $A$ we give general properties of the resulting embeddings, and counting dimension we determine such algebras up to isomorphism.

\subsection{The bocs of a finite poset} \label{B0}

We follow the construction of subsection~\ref{Nmedio} using a ditalgebra $(T,\delta)$ (see~\cite[34.1,~34.2]{BSZ} and~\cite[2.5]{dS92book}): The $K$-algebra $T$ is the path algebra $KQ$ of the quiver $Q$ with vertices the set $\poset \cup \{0\}$, one arrow $\alpha_i:i \to 0$ for each $i \in \poset$, and one arrow $v_{ij}:j \to i$ for each $i \prec j$. We denote by $e_0,e_1,\ldots,e_n$ the trivial paths of $Q$ (so $\alpha_i=e_0\alpha_ie_i$ for all $i \in \poset$). The grading of $T$ is determined additively by taking $|e_0|=|e_i|=|\alpha_i|=0$ for $i \in \poset$, and $|v_{ij}|=1$ for $i \prec j$. Take
\begin{equation} \label{Eq:layer}
R:=\bigoplus_{i \in \poset \cup \{0\}} K e_i, \qquad W_0:=\bigoplus_{i \in \poset} K \alpha_i \qquad \text{and} \qquad W_1:=\bigoplus_{i \prec j} K v_{ij}.
\end{equation}
It is well known that $KQ \cong T_R(W_0 \oplus W_1)$, and one can similarly verify that $KQ \cong T_A(V)$ where $A:=T_0$ and $V:=T_1$. In this situation, any pair of morphisms of $R-R$-bimodules $\delta_0:W_0 \to KQ$ and $\delta_1:W_1 \to KQ$ with $\delta_0(W_0) \subseteq [KQ]_1$ and $\delta_1(W_1) \subseteq [KQ]_2$ extend uniquely to a differential $\delta:KQ \to KQ$ satisfying $\delta(R)=0$ (cf.~\cite[4.1,4.3,4.4]{BSZ}). In our construction, the differential $\delta$ is then determined by the morphisms of $R-R$-bimodules $\delta_0:W_0 \to T_1$ and $\delta_1:W_1 \to T_2$ given by
 \begin{equation} \label{Eq:differential}
\delta_0(\alpha_i):=-\sum_{h\prec i}\alpha_h v_{hi}, \qquad 
\text{and} \qquad \delta_1(v_{ij})=\sum_{i\prec k\prec j}v_{i k} v_{k j}.
\end{equation}
That $\delta^2=0$ is shown in~\cite[34.1]{BSZ}. Note that $A=R \oplus W_0$ and that for any $i\prec j$ in $\poset$ the two dimensional $K$-vector space $[Kv_{ij} \oplus K\alpha_i v_{ij}]$ is an $A-A$-bimodule isomorphic to $Ae_i \otimes_K e_jA$ with the linear transformation given by $v_{ij} \mapsto e_i\otimes e_j$ and $\alpha_iv_{ij} \mapsto \alpha_i \otimes e_j$. In particular, there is an $A-A$-bimodule decomposition 
\begin{equation} \label{Eq:AV}
V=W_1 \oplus AW_1= \bigoplus_{i \prec j} [Kv_{ij} \oplus K\alpha_i v_{ij}] \cong \bigoplus_{i \prec j} Ae_i\otimes_K e_jA.
\end{equation}

Recall that the vector spaces $\Hom_A({}_AM_R,{}_AN_R)$ and $\Hom_A({}_RM_A,{}_RN_A)$ are $R-R$-bimodules with respective actions given by
\[
r_1fr_2(m):=f(mr_1)r_2 \qquad \text{and} \qquad r_1gr_2(m):=r_1g(r_2m).
\]
Since $\mathcal{R}\cong \Hom_A({}_AU,{}_AA)^{op}$ and $\mathcal{L}\cong \Hom_A(U_A,A_A)$, then this $R$ action coincides with the action of the subalgebra $\TO{R}$ on $\mathcal{R}$, and similarly for the subalgebra $\OT{R}$ of $\mathcal{L}$. We identify the set $\{e_i \mid i \in \poset \cup \{0\}\}$ with complete sets of orthogonal idempotents of $\mathcal{R}$ and $\mathcal{L}$ (not necessarily primitive, as we will see below for the left case). In particular, taking $e':=e_1+\ldots+e_n$ we will consider isomorphisms
\[
\mathcal{R}\cong \begin{pmatrix} e'\mathcal{R}e'&e'\mathcal{R}e_0\\e_0\mathcal{R}e'&e_0\mathcal{R}e_0 \end{pmatrix} \qquad \text{and} \qquad \mathcal{L}\cong \begin{pmatrix} e'\mathcal{L}e'&e'\mathcal{L}e_0\\e_0\mathcal{L}e'&e_0\mathcal{L}e_0 \end{pmatrix}.
\]

\subsection{The right algebra} \label{B1}

Let $M$ be a right $\mathcal{R}$-module with a (finite) $K$-basis $\mathbb{B}$. Then the action of $f \in \mathcal{R}$ on $M$ is determined by the structural constants $\{f^{b,b'}\}_{b,b' \in \mathbb{B}}$ given by the equations $b \cdot f =\sum_{b' \in \mathbb{B}}b'f^{b,b'}$ for $b \in \mathbb{B}$. Assuming that each $b \in \mathbb{B}$ belongs to $Me_i$ for some $i \in \{0,1,\ldots,n\}$, and that $f=e_jfe_i$ for $i,j \in \{0,1,\ldots,n\}$, then $f^{b,b'}=f^{be_j,b'e_i}$ for $b, b' \in \mathbb{B}$, where for convenience we take $f^{b,0}=f^{0,b'}=f^{0,0}=0$. Recall that $\omega$ is the grouplike element of the bocs $(A,U)$ given by its construction via ditalgebras.

\begin{lemma}\label{L:Rfaith}
The right $\omega$-action of $\mathcal{R}$ on $e_0A$ is faithful. Moreover, the structural constants of this action with respect to the $K$-basis $\mathbb{B}_0=\{\alpha_1,\ldots,\alpha_n,e_0\}$ of $e_0A$ satisfy that $f^{\alpha_j,b} \neq 0$ implies $b=\alpha_i$ for some $i \preceq j$, for any $f \in \mathcal{R}$ and any $j \in \poset$.
\end{lemma}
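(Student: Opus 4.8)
The plan is to compute the right $\omega$-action of $\mathcal{R}$ on $A$ explicitly on the generators $\alpha_j$ and $e_0$ of $e_0A$, and deduce faithfulness together with the triangularity statement from the resulting formulas. First I would recall that, since $(A,U)$ arises from the ditalgebra $(T,\delta)$ with $U=\omega A\oplus V$, the counit is $\epsilon(\omega b+v)=b$ and the comultiplication is $\mu(\omega b+v)=\omega\otimes\omega b+\delta(v)+\omega\otimes v+v\otimes\omega$. Writing a generic element $f\in\mathcal{R}\cong\Hom_A({}_AU,{}_AA)^{op}$, it is determined by its values on $\omega$ and on the basis vectors $v_{ij}$ (equivalently $\alpha_iv_{ij}$) of $V$, subject to the $A$-linearity relations coming from $a(\omega b+v)=\omega ab+\delta(a)b+av$. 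I would first unwind these relations to see that $f$ is pinned down by the scalars $f(\omega)\in A$ together with a choice of $f(v_{ij})\in e_iAe_j$ for each $i\prec j$, and that $f(\omega e_k)=e_kf(\omega)$ while the action of $A$ on the $V$-part interacts with $\delta_0(\alpha_i)=-\sum_{h\prec i}\alpha_hv_{hi}$.

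Next I would compute $z\cdot f=f(\omega z)$ for $z\in\{\alpha_j,e_0\}$. Since $\omega e_0=\omega e_0$ (no differential contribution from $e_0$, as $\delta(R)=0$), we get $e_0\cdot f=f(\omega e_0)=e_0f(\omega)$, which lands in $e_0A$ and expands in $\mathbb{B}_0$ with coefficients read off from $f(\omega)$. For $z=\alpha_j=\alpha_je_j$, using the bimodule formula $\alpha_j(\omega e_j)=\omega\alpha_j+\delta(\alpha_j)$ — more precisely $\omega\alpha_j$ together with $\delta_0(\alpha_j)=-\sum_{h\prec j}\alpha_hv_{hj}\in V$ — we obtain
\[
\alpha_j\cdot f=f(\omega\alpha_j)=f(\omega)'\text{-type term}-\sum_{h\prec j}f(\alpha_hv_{hj}),
\]
where the first term is $e_0$-component-free in the relevant sense and each $f(\alpha_hv_{hj})=\alpha_h\cdot f(v_{hj})$ lies in $e_0Ae_j$, hence is a $K$-multiple of $\alpha_h$ after projecting onto $\mathbb{B}_0$ (because $e_0Ae_h=K\alpha_h$ for $h\in\poset$ and $e_0Ae_j$ similarly). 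This already gives the triangularity: the only basis vectors $b$ with $f^{\alpha_j,b}\neq0$ are $b=\alpha_h$ with $h\prec j$, or $b=\alpha_j$ itself coming from the $\omega\alpha_j$ term — in all cases $b=\alpha_i$ with $i\preceq j$, and never $b=e_0$, which is the assertion. Faithfulness then follows by observing that the values $e_0\cdot f=e_0f(\omega)$ and $\alpha_j\cdot f$ for all $j$ recover $f(\omega)$ and all the $f(v_{ij})$: if $z\cdot f=0$ for all $z\in e_0A$ then $e_0f(\omega)=0$ forces the "$\omega$-part" of $f$ restricted to $e_0A\otimes$ to vanish, and induction on the poset ordering (starting from minimal $j$, where the sum $\sum_{h\prec j}$ is empty) forces each $f(v_{hj})=0$; since every $f\in\mathcal{R}=e_0\mathcal{R}+\ldots$ is detected through its action built from $\epsilon$ and $\mu$, vanishing on all of $e_0A$ forces $f=0$.

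The main obstacle I anticipate is bookkeeping: making precise the claim that an element $f\in\mathcal{R}$ is faithfully recorded by the tuple $\bigl(e_0f(\omega),\,(f(v_{ij}))_{i\prec j}\bigr)$, and checking that the inductive recovery of the $f(v_{ij})$ from the data $\{\alpha_j\cdot f\}_j$ genuinely terminates — this is where the condition "$i\preceq j\Rightarrow i\leq j$" on the labelling of $\poset$ does the work, guaranteeing the sums $\delta_0(\alpha_j)=-\sum_{h\prec j}\alpha_hv_{hj}$ only involve strictly smaller indices. A secondary subtlety is that $e_0A$ as a right $A$-module has the stated basis $\mathbb{B}_0$ precisely because $A=R\oplus W_0$ with $e_0Ae_0=Ke_0$ and $e_0Ae_i=K\alpha_i$; once that decomposition is invoked the projection of each $f(\alpha_hv_{hj})\in e_0Ae_j$ onto $\mathbb{B}_0$ is forced to be a multiple of a single basis vector $\alpha_h$ (after moving $e_j$), which is exactly what pins down the support condition on the structural constants. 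None of the remaining computations are more than a direct application of the bimodule and comultiplication formulas from subsection~\ref{Nmedio}.
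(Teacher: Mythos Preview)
Your approach is the same as the paper's: compute $\alpha_j\cdot f$ and $e_0\cdot f$ explicitly via the identity $\omega\alpha_j=\alpha_j\omega-\delta(\alpha_j)$, read off the triangularity of the structural constants, and deduce faithfulness from the resulting formula. The paper obtains (up to your sign slip, which is harmless)
\[
\alpha_j\cdot f \;=\; \alpha_j f(\omega) \;+\; \sum_{i\prec j}\alpha_i f(v_{ij}),
\]
exactly what you describe.

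There is, however, a real gap in your faithfulness sketch. You assert that $f\in\mathcal{R}$ is ``faithfully recorded by the tuple $\bigl(e_0f(\omega),(f(v_{ij}))_{i\prec j}\bigr)$'', and this is false: as a left $A$-linear map on $U=A\omega\oplus V$, the element $f$ is determined by the full value $f(\omega)\in A$ together with the $f(v_{ij})$, and your tuple drops the components $e_jf(\omega)$ for $j\in\poset$. The fix is already present in your own computation but you do not identify it: since $e_jA=Ke_j$ for $j\in\poset$, the term $\alpha_jf(\omega)=\alpha_j\cdot e_jf(\omega)$ is a scalar multiple of $\alpha_j$, so the \emph{diagonal} coefficient $f^{\alpha_j,\alpha_j}$ is precisely (the scalar encoding) $e_jf(\omega)$. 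Once you see this, no induction on the poset is needed at all: the $\alpha_i$ are linearly independent in $e_0A$, so $\alpha_j\cdot f=0$ forces simultaneously $e_jf(\omega)=0$ and $f(v_{ij})=0$ for every $i\prec j$; combined with $e_0f(\omega)=0$ from $e_0\cdot f=0$, this yields $f(\omega)=0$ and $f|_V=0$, hence $f=0$. This direct argument is what the paper does.
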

\begin{proof}
Using~(\ref{Eq:differential}), the right $\omega$-action of $f$ on $\alpha_j$ is given by
\begin{equation}\label{Eq:nuevo}
\alpha_j \cdot f = f(\omega \alpha_j)=f(\alpha_j \omega -\delta(\alpha_j))=\alpha_jf(\omega)+\sum_{i\prec j}\alpha_if(v_{ij})=\sum_{i\preceq j}\alpha_if^{\alpha_j,\alpha_i}.
\end{equation}
This shows our last claim on structural constants. We also have $e_0 \cdot f =f(\omega e_0)=e_0f(\omega)=\sum_{b \in \mathbb{B}_0}bf^{e_0,b}$. Assume now that $z \cdot f=0$ for all $z \in e_0A$ for some element $f:{}_AU=A\omega \oplus V \to {}_AA$ of $\mathcal{R}$. Note that $\alpha_jf(\omega)=\alpha_jf(e_j\omega)$ with $f(e_j\omega) \in e_jA=Ke_j$ and $f(v_{ij}) \in e_iA=Ke_i$. Therefore, using~(\ref{Eq:nuevo}) we get $0=f^{\alpha_j,\alpha_j}e_j=f(e_j \omega)$ and $0=f^{\alpha_j,\alpha_i}e_i=f(v_{ij})$. Then $f(V)=0$, and since $0=e_0 \cdot f=f(\omega e_0)=f(e_0\omega)$ we have $f(\omega)=0$, which implies $f=0$. That is, the given action is faithful.
\end{proof}

For a finite poset $\poset$ we denote by $\Card(i\prec j,\poset)$ be the number of pairs $(i,j)$ with $i \prec j$ in $\poset$. Recall that $K\poset$ denotes the incidence algebra of $\poset$, see subsection~\ref{N3}.

\begin{theorem}\label{T:RBB}
For any finite poset $\poset$ there are isomorphisms of $K$-algebras
\[
A \cong\begin{pmatrix} R' & 0 \\ \Mat_{1 \times n}(K)&K \end{pmatrix}  \longhookrightarrow \begin{pmatrix} K\poset & 0 \\ \Mat_{1 \times n}(K)&K \end{pmatrix} \cong \mathcal{R}^{\poset},
\]
where $\mathcal{R}^{\poset}$ is the right Burt-Butler algebra of the bocs $(A,U)$ associated to $\poset$.
\end{theorem}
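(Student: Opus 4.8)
The plan is to exploit the faithful $\omega$-action of $\mathcal{R}$ on $e_0A$ provided by Lemma~\ref{L:Rfaith}, so that $\mathcal{R}$ is identified with a subalgebra of $\End_K(e_0A)^{op} \cong \Mat_{n+1}(K)$, and then to pin down exactly which matrices occur by combining the structural-constant restrictions of Lemma~\ref{L:Rfaith} with a dimension count. First I would record the $\mathbb{B}_0$-grading $e_0A = \bigoplus_{i\in\poset} K\alpha_i \oplus Ke_0$, observing that $\alpha_i \in (e_0A)e_i$ and $e_0 \in (e_0A)e_0$, so the idempotents $e_1,\dots,e_n,e_0$ of $\mathcal{R}$ act as the corresponding coordinate projections on $e_0A$. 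Then, writing $f \in \mathcal{R}$ via its structural constants, Lemma~\ref{L:Rfaith} says $f^{\alpha_j,\alpha_i} \neq 0 \Rightarrow i \preceq j$; so the "$e'\mathcal{R}e'$ block'' embeds into $K\poset$ (using the basis ordering convention $i \preceq j \Rightarrow i \le j$ from subsection~\ref{N3}), while there is no constraint forcing $f^{e_0,\alpha_i}$ or $f^{\alpha_j, e_0}$ or $f^{e_0,e_0}$ to vanish — except that $f^{\alpha_j,e_0}=0$ automatically because $\alpha_j \cdot f \in (e_0A)e_0$ would be needed but $\alpha_j \in (e_0A)e_j$ and $e_j \mathcal{R} e_0$ hits only $(e_0A)e_0 = Ke_0$; more precisely the block structure $e_0 \mathcal{R} e' = 0$ follows since $e_0 f e' $ acts by sending the $\alpha_i$-components into the $e_0$-component only via $e_0\mathcal{R}e'$, which I must check is forced to vanish. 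This gives the inclusion $\mathcal{R} \hookrightarrow \left(\begin{smallmatrix} K\poset & 0 \\ \Mat_{1\times n}(K) & K\end{smallmatrix}\right)$ as algebras.

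Next I would verify the reverse inclusion by a dimension count. The right-hand algebra has dimension $\dim K\poset + n + 1 = (n + \Card(i\prec j,\poset)) + n + 1$. On the other hand $\mathcal{R} \cong \Hom_A({}_AU,{}_AA)^{op}$, and using $U = \omega A \oplus V$ together with the bimodule decomposition $V \cong \bigoplus_{i\prec j} Ae_i \otimes_K e_jA$ from~(\ref{Eq:AV}) and the Hom-adjunction $\Hom_A(Ae_i\otimes_K e_jA, {}_AA) \cong \Hom_A(Ae_i, Ae_j) \cong e_j A e_i$, I can compute $\dim_K \mathcal{R} = \dim_K \Hom_A({}_A\omega A, {}_AA) + \sum_{i\prec j}\dim_K e_jAe_i$. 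Since $\omega A \cong {}_AA$ as left modules (it is free of rank one, $\epsilon$ restricting to an iso on $\omega A$), $\dim_K\Hom_A({}_A\omega A,{}_AA) = \dim_K A = \dim_K R + \dim_K W_0 = (n+1) + n$. For the second summand, $e_j A e_i$ with $i \prec j$: since $A = R \oplus W_0$ and $W_0 = \bigoplus_k K\alpha_k$ with $\alpha_k = e_0\alpha_k e_k$, one has $e_j A e_i = 0$ when $i \prec j$ (as $i\neq j$, $e_jRe_i=0$, and $e_j W_0 e_i \neq 0$ only if $j=0$, impossible since $j \succ i \geq 1$). So the naive count gives $\dim_K \mathcal{R} = 2n+1$, which is too small — this signals that the decomposition of $U$ I should use is not simply additive in the needed way, or that I must instead compute $\Hom_A({}_AU,{}_AA)$ directly from $U = \omega A \oplus V$ \emph{without} assuming the sum of Homs splits, because the left $A$-module structure on $U$ mixes $\omega A$ and $V$ via $a\omega b = \omega ab + \delta(a)b$. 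The correct approach: filter ${}_AU$ with $V \subseteq U$ an $A$-submodule and $U/V \cong {}_AA$; then $\Hom_A({}_AU,{}_AA)$ sits in an exact sequence with $\Hom_A({}_AA,{}_AA)$ and $\Hom_A({}_AV,{}_AA)$, and I expect $\dim_K\mathcal{R} = \dim_K A + \dim_K\Hom_A({}_AV,{}_AA) - (\text{correction from }\Ext)$, ultimately matching $\Card(i\prec j,\poset) + 2n + 1$ if $\Hom_A({}_AV,{}_AA)$ contributes one dimension per pair $i \prec j$ coming from the non-split part of the $\alpha_i v_{ij}$ direction.

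The main obstacle, then, is the dimension computation of $\mathcal{R}$: I expect the cleanest route is \emph{not} via $\Hom$ but directly via the faithful action. Namely, having shown $\mathcal{R}$ embeds into the explicit matrix algebra, I would show surjectivity by exhibiting, for each basis matrix, an element of $\mathcal{R}$ (i.e.\ an $A$-linear map ${}_AU \to {}_AA$) realizing it: for $E_{ji} \in K\poset$ with $i\preceq j$ the map $\TO{z}$-type construction or a direct formula on $\omega A \oplus V$; for the bottom row $\Mat_{1\times n}(K)$ the maps sending $\alpha_i \omega$-components appropriately; and for the $(0,0)$-entry the counit $\epsilon$. The delicate point is constructing, for each pair $i\prec j$ (not just immediate successors), an $A$-linear $f: {}_AU \to {}_AA$ with prescribed $f^{\alpha_j,\alpha_i}$; here one must use that $\delta_1(v_{ij}) = \sum_{i\prec k\prec j} v_{ik}v_{kj}$ forces compatibility conditions, and I anticipate the verification that these conditions are exactly satisfied by matrices in $K\poset$ (closure under the incidence order) is where the poset structure genuinely enters. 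Once both inclusions are established, the displayed embedding $A \hookrightarrow \mathcal{R}$ is immediate: it is the algebra map $z \mapsto \TO{z}$ from subsection~\ref{N1}, which in matrix form sends the diagonal $R' \subseteq A$ into the diagonal of $K\poset$ and $W_0$ (the matrices $[0\ \cdots\ 0 \mid *]$ bottom row, identified with $\Mat_{1\times n}(K)$) identically, giving the block-triangular inclusion as stated.
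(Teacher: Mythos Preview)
Your overall strategy---embed $\mathcal{R}$ into $\Mat_{n+1}(K)$ via the faithful action of Lemma~\ref{L:Rfaith}, show the image lands in the target matrix algebra, then close the argument by a dimension count---is exactly the paper's approach. The gap is a concrete computational error in the dimension count, not a structural obstacle, and once it is fixed the whole pivot to ``construct explicit elements'' becomes unnecessary.

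Your ``adjunction'' $\Hom_A(Ae_i\otimes_K e_jA,{}_AA)\cong \Hom_A(Ae_i,Ae_j)\cong e_jAe_i$ is not a valid identity. The correct observation is that, as a \emph{left} $A$-module, $Ae_i\otimes_K e_jA$ is $Ae_i$ tensored over $K$ with the vector space $e_jA$; since $e_jA=Ke_j$ is one-dimensional for $j\in\poset$, this is just $Ae_i$, and $\Hom_A({}_AAe_i,{}_AA)\cong e_iA=Ke_i$ contributes one dimension for each pair $i\prec j$. Your subsequent worry that $U=\omega A\oplus V$ fails to split as left $A$-modules is half-right but easily repaired: $\omega A$ is indeed not a left submodule, but $A\omega=\{a\omega\mid a\in A\}$ \emph{is} (it is the image of the left-$A$-linear section $a\mapsto a\omega$ of $\epsilon$), and ${}_AU=A\omega\oplus V$ splits with $A\omega\cong{}_AA$. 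Hence
\[
\dim_K\mathcal{R}=\dim_K A+\sum_{i\prec j}\dim_K e_iA=(2n+1)+\Card(i\prec j,\poset),
\]
which matches the dimension of $\left(\begin{smallmatrix}K\poset&0\\ \Mat_{1\times n}(K)&K\end{smallmatrix}\right)$, so the embedding is an isomorphism. No $\Ext$-correction and no explicit construction of preimages is needed.

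Separately, your discussion of why the upper-right block vanishes is muddled. The clean reason is already contained in Lemma~\ref{L:Rfaith}: the conclusion ``$f^{\alpha_j,b}\neq 0$ implies $b=\alpha_i$ for some $i\preceq j$'' directly forces $f^{\alpha_j,e_0}=0$ for every $j\in\poset$, which is exactly the vanishing of that block.
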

\begin{proof}
Consider the embedding of $K$-algebras $\mathcal{R} \to \End_K(e_0A)^{op}$ obtained from Lemma~\ref{L:Rfaith}, and the usual identification $\End_K(e_0A)^{op} \to \Mat_{n+1}(K)$ given through the ordered basis $(\alpha_1,\ldots,\alpha_n,e_0)$. Denote by $G(f)$ the matrix corresponding to $f \in \mathcal{R}$, and note that $G(f)_{ji}=f^{x_j,x_i}$ through this identification, where $x_i:=\alpha_i$ if $i\leq n$ and $x_{n+1}:=e_0$. Then the claims in Lemma~\ref{L:Rfaith} are translated to having a monomorphism of $K$-algebras,     
\[
G: \mathcal{R} \longrightarrow \begin{pmatrix} K\poset & 0 \\ \Mat_{1 \times n}(K)&K \end{pmatrix}.
\]
To count dimensions, using~(\ref{Eq:AV}) we have ${}_AU=A\omega \oplus \bigoplus_{i \prec j}Ae_i \otimes_K e_jA \cong A \oplus  \bigoplus_{i \prec j}Ae_i$ as left $A$-modules (since $e_jA$ is one-dimensional). Therefore, $\mathcal{R}=\Hom_A({}_AU,{}_AA)\cong A \oplus \bigoplus_{i \prec j}e_iA$, and taking dimensions over $K$ we get $\dimk_K\mathcal{R}=1+2n+\Card(i\prec j, \poset)$. Moreover, 
\begin{eqnarray*}
\dimk_K \begin{pmatrix} K\poset & 0 \\ \Mat_{1 \times n}(K)&K \end{pmatrix}&=&\dimk_K K\poset +\dimk_K \Mat_{1 \times n}(K) + \dimk_K K \\
&=&n+\Card(i\prec j,\poset)+n+1=\dimk_K \mathcal{R}.
\end{eqnarray*}
Then $G$ is an isomorphism. The structural constants of $\TO{z}$ for $z \in \TO{A}$ can be easily computed: $x_i \cdot \TO{e_j}=x_ie_j=\delta_{i,j}x_i$, and $x_i \cdot \TO{\alpha_j}=x_i\alpha_j=\delta_{i,n+1}x_j$. That is, $G(\TO{e_i})=E_{ii}$ for $i=1,\ldots,n+1$, and $G(\TO{\alpha_j})=E_{n+1,j}$ for $j=1,\ldots,n$, which shows that $A \cong \TO{A} \cong G(\TO{A})=\left( \begin{smallmatrix} R' & 0 \\ \Mat_{1 \times n}(K)&K \end{smallmatrix} \right)$, as claimed.
\end{proof}

\subsection{The left algebra} \label{B2}

In general, the left $\omega$-action of $\mathcal{L}$ on $A$ is not faithful. To follow the approach of the right case we consider a new grouplike element $\omega':=\omega+v'$ with $v':=\sum_{i \prec j}v_{ij}$. Note that $\epsilon(\omega')=\epsilon(\omega)=1$ and
\[
\delta(v')=\sum_{i \prec k}\delta(v_{ik})=\sum_{i \prec j \prec k}v_{ij} \otimes v_{jk}=v' \otimes v',
\] 
and therefore, $\mu(\omega')=\mu(\omega+v')=\omega\otimes\omega + \omega \otimes v'+v' \otimes\omega+v'\otimes v'=\omega' \otimes \omega'$, as claimed. Observe also that
\[
e_0 \omega' =\omega'e_0, \qquad e'\omega'=\omega'e' \qquad \text{and} \qquad \alpha'\omega'=\omega'\alpha', \qquad \text{where $\alpha':=\alpha_1+\ldots+\alpha_n$.}
\]
As before, for a left $\mathcal{L}$-module $M$ with (finite) $K$-basis $\mathbb{B}$ whose elements belong to $e_iM$ for some $i$, the action of $f \in \mathcal{L}$ on $M$ is determined by the structural constants $\{f^{b',b}\}_{b,b' \in \mathbb{B}}$, given by $f \cdot b =\sum_{b' \in \mathbb{B}}f^{b',b}b'$ for $b \in \mathbb{B}$. Moreover, if $f=e_jfe_i$ for $i,j \in \{0,1,\ldots,n\}$, then $f^{b,b'}=f^{e_jb',e_ib}$ for $b, b' \in \mathbb{B}$.

\begin{lemma}\label{L:Lfaith}
The left $\omega'$-action of $\mathcal{L}$ on $Ae'$ is faithful and satisfies $\mathcal{L}\cdot \alpha'=K\alpha'$. Moreover, the structural constants of this action with respect to the $K$-basis $\mathbb{B}'=\{e_1,\ldots,e_n,\alpha_1,\ldots,\alpha_n\}$ of $Ae'$ satisfy that $f^{x_j,y_i} \neq 0$ implies $i \preceq j$ for any $f \in \mathcal{R}$ and any $x_j \in \{e_j,\alpha_j\}$ and $y_i \in \{e_i,\alpha_i\}$.
\end{lemma}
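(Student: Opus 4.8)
The plan is to mimic the proof of Lemma~\ref{L:Rfaith}, now using the twisted grouplike element $\omega'$ and the left $\omega'$-action $g \cdot z = g(z\omega')$ of $\mathcal{L}$ on ${}_AA$, and to restrict attention to the summand $Ae'$. First I would compute the action on the basis $\mathbb{B}'$ explicitly. For $z = e_i$ with $i \in \poset$, since $e_i\omega' = e_i\omega + \sum_{k \prec i} v_{ki}$ and $\omega a = \omega a$ while the bimodule formulas give $e_i \omega = \omega e_i + \delta(e_i) = \omega e_i$ (as $\delta(R)=0$), one gets $e_i\omega' = \omega e_i + \sum_{k \prec i} v_{ki}$; hence for $f \in \mathcal{L}$,
\[
f \cdot e_i = f(e_i\omega') = f(\omega e_i) + \sum_{k \prec i} f(v_{ki}) = e_i \cdot f(\omega) + \sum_{k \prec i} f(v_{ki}),
\]
where $f(\omega e_i) = f(e_i \cdot \omega)$? — more carefully, one uses $f \in \Hom_A(U_A,A_A)$ so $f(\omega e_i) = f(\omega)e_i$ — let me instead track the \emph{right}-module structure of $U$ in these formulas, since $\mathcal{L} \cong \Hom_A(U_A,A_A)$. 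Using $(\omega b + v)a = \omega ba + va$, we have $\omega e_i, v_{ki} \in Ue_i$, so $f(\omega e_i), f(v_{ki}) \in Ae_i$; likewise for $z = \alpha_j$ one computes $\alpha_j \omega' = \omega \alpha_j + \sum_{k \prec j}\alpha_j v_{kj}$? — no: $\alpha_j\omega = \omega \alpha_j$ since $\delta(\alpha_j) \in V$ does not appear under right multiplication by $\alpha_j$; rather $\alpha_j$ is an element of $A$ multiplying $\omega$ \emph{on the right}, so $\omega \alpha_j = \omega\alpha_j$ stays in $\omega A$, and $\alpha_j v_{ij}$-type terms come from $v' \alpha_j$. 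The point is that each of $f \cdot e_i$ and $f \cdot \alpha_j$ lands in $\bigoplus_{i \preceq j}(Ke_i + K\alpha_i)$, which is exactly the triangularity claim on the structural constants $f^{x_j,y_i}$.

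Next I would prove faithfulness. Suppose $g \cdot z = 0$ for all $z \in Ae'$, where $g : U_A = \omega A \oplus V \to A_A$. Evaluating at $z = e_i$ and $z = \alpha_i$ for each $i \in \poset$ and reading off the homogeneous components in each $Ae_i$ (using that $e_jA = Ke_j$ is one-dimensional, so the coefficients decouple), I would deduce first that $g$ vanishes on all of $V$ — because the $v_{ij}$ appear with linearly independent "new" contributions not cancelled by the $\omega$-terms, exactly as the identity $0 = f^{\alpha_j,\alpha_i}e_i = f(v_{ij})$ worked in Lemma~\ref{L:Rfaith} — and then, $V = \Ker \epsilon$ being killed, that $g$ factors through $\epsilon$; but $g(\omega e_i) = 0$ for all $i \in \poset$ together with $g(\omega e_0)$ — here is the subtlety — does \emph{not} immediately give $g(\omega) = 0$, because $e_0 \notin \poset$ and $Ae_0$ was excluded. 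I need to argue that $g(\omega e_0) = 0$ as well: this should follow from $g \cdot \alpha_i = 0$, since $\alpha_i = e_0 \alpha_i e_i$ and the term $g(\omega \alpha_i) = g(\omega e_0 \cdot \alpha_i)$? — no, $\omega\alpha_i \in \omega A$ with $\omega \alpha_i = (\omega e_0)\alpha_i$ under the right action only if $\alpha_i = e_0\alpha_i$, which it is; so $g(\omega\alpha_i) = g(\omega e_0)\alpha_i$ and knowing this is $0$ for all $i$, together with the fact that right multiplication $Ke_0 \to K\alpha_i$, $\lambda e_0 \mapsto \lambda \alpha_i$ is injective, forces $g(\omega e_0) = 0$. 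Combined with $g(\omega e_i) = 0$ for $i \in \poset$ this gives $g(\omega) = 0$, hence $g = 0$.

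Finally, for the assertion $\mathcal{L} \cdot \alpha' = K\alpha'$: from the triangularity, $f \cdot \alpha'$ is a combination of $e_i$'s and $\alpha_i$'s, and I would show the $e_i$-coefficients and all $\alpha_i$-coefficients for $i$ non-maximal are forced to combine into a scalar multiple of $\alpha' = \sum_i \alpha_i$. The clean way is to use the relation $\alpha'\omega' = \omega'\alpha'$ noted just before the lemma: for any $f \in \mathcal{L} = \Hom_A(U_A,A_A)$ one has $f \cdot \alpha' = f(\alpha'\omega') = f(\omega'\alpha') = f(\omega')\alpha'$, and since $f(\omega') \in A$ while $A\alpha' \subseteq K\alpha'$? — one checks $e_i\alpha' = \alpha_i$ and $\alpha_i\alpha' = 0$, so $A\alpha' = \bigoplus_i K\alpha_i$, not $K\alpha'$ — so instead I use $f(\omega') e' \cdot$ acting, noting $e'\omega' = \omega'e'$ forces $f(\omega') \in \End$-type constraints; the honest route is: $f(\omega')\alpha'$ with $f(\omega') \in A$, write $f(\omega') = \sum_i \lambda_i e_i + (\text{terms in }W_0)$, then $f(\omega')\alpha' = \sum_i \lambda_i \alpha_i + 0$; but the counit condition $\epsilon(\omega') = 1$ gives $\epsilon$-component constraints, and more importantly the \emph{left} $R$-equivariance of $f$ (the identity $e'\omega' = \omega'e'$ makes $\omega'$ commute with the idempotent $e'$, so $f(\omega') = e'f(\omega')e' + \dots$ can be organized) pins all $\lambda_i$ equal. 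This last normalization — showing the $\lambda_i$ coincide rather than just that $f \cdot \alpha' \in \bigoplus_i K\alpha_i$ — is the main obstacle, and I expect it to follow from carefully exploiting that $\omega'$ was built symmetrically over all $i \prec j$ (so that $v' = \sum_{i\prec j} v_{ij}$ treats every source index uniformly), forcing $f$ to act by a single scalar on the "diagonal" $\alpha'$-direction; the rest of the proof is the routine bookkeeping of structural constants paralleling Lemma~\ref{L:Rfaith}.
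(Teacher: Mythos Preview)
Your overall strategy is exactly the paper's, but two left/right confusions derail the write-up.

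First, in computing $e_i\omega'$: since $v_{ij} = e_iv_{ij}e_j$, \emph{left} multiplication gives $e_iv' = \sum_{i\prec j}v_{ij}$, not $\sum_{k\prec i}v_{ki}$ (the latter is $v'e_i$). Thus $f\cdot e_i = f(\omega)e_i + \sum_{i\prec j}f(v_{ij})$, and as $f(v_{ij}) \in Ae_j$ the triangularity $i \preceq j$ follows. The parallel computation is $\alpha_i\omega' = \omega\alpha_i + \delta(\alpha_i) + \alpha_iv' = \omega\alpha_i - \sum_{h\prec i}\alpha_hv_{hi} + \sum_{i\prec j}\alpha_iv_{ij}$, so $f\cdot\alpha_i \in Ae_i \oplus \bigoplus_{i\prec j}Ae_j$ as well. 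With these corrected formulas your faithfulness argument goes through: reading off the $Ae_j$-components of $f\cdot e_i = 0$ and $f\cdot\alpha_i = 0$ gives $f(\omega e_i)=0$, $f(v_{ij})=0$ and $f(\alpha_iv_{ij})=0$, hence $f|_V=0$; then your extraction of $f(\omega e_0)=0$ from $f(\omega\alpha_i)=f(\omega e_0)\alpha_i$ is valid (the paper does the equivalent thing via $f\cdot\alpha' = 0$).

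Second, and more consequentially, your ``main obstacle'' is illusory and comes from the same slip. For $i \in \poset$ one has $e_i\alpha' = 0$ (because $\alpha_j = e_0\alpha_je_j$, so $e_i\alpha_j = 0$), not $e_i\alpha'=\alpha_i$; what you computed is $\alpha'e_i = \alpha_i$. Similarly $\alpha_i\alpha'=0$. Hence $A\alpha' = Ke_0\alpha' = K\alpha'$, and from $\alpha'\omega'=\omega'\alpha'$ one gets immediately
\[
f\cdot\alpha' = f(\alpha'\omega') = f(\omega'\alpha') = f(\omega')\alpha' \in A\alpha' = K\alpha'.
\]
No normalization of the $\lambda_i$ is needed; this is precisely the paper's one-line argument for $\mathcal{L}\cdot\alpha' = K\alpha'$.
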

\begin{proof}
Using~(\ref{Eq:differential}), the left $\omega'$-actions  of $f$ on $e_i$ and $\alpha_i$ are given by
\[
f \cdot e_i = f(e_i \omega')=f(\omega e_i +\delta(e_i)+e_iv')=f(\omega)e_i+\sum_{i\prec j}f(v_{ij})e_j
=\sum_{i\preceq j}(f^{e_j,e_i}e_j+f^{\alpha_j,e_i}\alpha_j),
\]
and
\[
f \cdot \alpha_i = f(\alpha_i \omega')=f(\omega\alpha_i +\delta(\alpha_i)+\alpha_iv')=f(\omega)\alpha_i-\sum_{h\prec i}f(\alpha_hv_{hi})e_i+\sum_{i\prec j}f(\alpha_iv_{ij})e_j=\sum_{i\preceq j}(f^{e_j,\alpha_i}e_j+f^{\alpha_j,\alpha_i}\alpha_j).
\]
This shows the claim on structural constants. Since $\alpha'\omega'=\omega'\alpha'$, then $f \cdot \alpha'=f(\alpha' \omega')=f(\omega')\alpha'$ for any $f \in \mathcal{L}$. Observe that $A\alpha'=K\alpha'$, which shows that $\mathcal{L}\cdot \alpha'=K\alpha'$. 

Assume now that $f \cdot z=0$ for all $z \in Ae'$. By the description of $f \cdot e_i$ we have $f(e_i \omega )=0=f(v_{ij})$ for all $i$ in $\poset$ and $i \prec j$. Similarly, from the description of $f \cdot \alpha_i$ we get $f(\alpha_iv_{ij})=0$ for all $i$ in $\poset$ and $i \prec j$. Finally, we have $0=f\cdot \alpha'=f(\alpha' \omega')=f(e_0\omega'\alpha')=f(e_0\omega +v'\alpha')=f(e_0 \omega)$. Then $f=0$, which completes the proof.
\end{proof}

Recall that $K^{\ones}\poset$ denotes the row-balanced incidence algebra of $\poset$, and $K^{\ones}_0\poset$ its row-centralized subspace, see subsection~\ref{N3}.

\begin{theorem}\label{T:LBB}
For any finite poset $\poset$ there are isomorphisms of $K$-algebras
\[
A \cong\begin{pmatrix} R' & 0 \\ R'& K\Id \end{pmatrix}  \longhookrightarrow  \begin{pmatrix} K\poset & K^{\ones}_0\poset \\ K\poset&K^{\ones}\poset \end{pmatrix} \cong \mathcal{L}^{\poset},
\]
where $\mathcal{L}^{\poset}$ is the left Burt-Butler algebra of the bocs $(A,U)$ associated to $\poset$.
\end{theorem}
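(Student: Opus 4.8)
The strategy mirrors the proof of Theorem~\ref{T:RBB}, now using the faithful left $\omega'$-action on $Ae'$ furnished by Lemma~\ref{L:Lfaith}. First I would fix the ordered $K$-basis $(e_1,\ldots,e_n,\alpha_1,\ldots,\alpha_n)$ of $Ae'$ and let $G:\mathcal{L} \to \End_K(Ae')$ be the algebra embedding coming from Lemma~\ref{L:Lfaith}, recording $f$ by the matrix of structural constants $f^{x_j,y_i}$ arranged in the block form $\left(\begin{smallmatrix} (f^{e_j,e_i}) & (f^{e_j,\alpha_i}) \\ (f^{\alpha_j,e_i}) & (f^{\alpha_j,\alpha_i}) \end{smallmatrix}\right)$. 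The structural-constant constraint $f^{x_j,y_i}\neq 0 \Rightarrow i \preceq j$ says each of the four $n\times n$ blocks lies in $K\poset$, so $G$ already lands in $\Mat_2(K\poset)$. The point is to identify precisely which sub-bimodule of $\Mat_2(K\poset)$ is the image, i.e.\ to pin down the relations among the four blocks forced by the structure of $U$ and the comultiplication.

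The key relations I expect to extract from the explicit formulas in the proof of Lemma~\ref{L:Lfaith} (the displays for $f\cdot e_i$ and $f\cdot\alpha_i$) together with the second display of grouplikeness, $\mathcal{L}\cdot\alpha'=K\alpha'$. The identity $f\cdot\alpha'=f(\omega')\alpha'$ combined with $\sum_i\alpha_i=\alpha'$ should force, for each $j$, the relation $\sum_i f^{\alpha_j,\alpha_i}=$ (a scalar independent of $j$) and $\sum_i f^{e_j,\alpha_i}=0$; that is, the lower-right block $(f^{\alpha_j,\alpha_i})$ has all row sums equal, hence lies in $K^{\ones}\poset$, and the upper-right block $(f^{e_j,\alpha_i})$ has vanishing row sums, hence lies in $K^{\ones}_0\poset$. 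The left two blocks $(f^{e_j,e_i})$ and $(f^{\alpha_j,e_i})$ should be unconstrained beyond membership in $K\poset$, coming respectively from $f(\omega)$ acting on the $e_i$-summand and from the $\delta$-terms. This gives a monomorphism $G:\mathcal{L}\hookrightarrow \left(\begin{smallmatrix} K\poset & K^{\ones}_0\poset \\ K\poset & K^{\ones}\poset \end{smallmatrix}\right)$, and one must check this set of matrices is closed under multiplication (routine: $K^{\ones}\poset$ is a subalgebra and $K^{\ones}_0\poset$ a $K\poset$--$K^{\ones}\poset$-bimodule, both noted in subsection~\ref{N3}).

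Next I would count dimensions to conclude $G$ is onto. As in Theorem~\ref{T:RBB}, from $U_A = \omega A \oplus V$ and the bimodule decomposition~(\ref{Eq:AV}) one gets $U_A \cong A_A \oplus \bigoplus_{i\prec j} e_j A$ as \emph{right} $A$-modules (here using $Ae_i$ one-dimensional), so $\mathcal{L}=\Hom_A(U_A,A_A)\cong \Hom_A(A_A,A_A)\oplus\bigoplus_{i\prec j}\Hom_A(e_jA,A)\cong A \oplus \bigoplus_{i\prec j} Ae_j$, whence $\dimk_K\mathcal{L}=(2n)+\sum_{i\prec j}|Ae_j|$. Computing $|Ae_j|=\dimk_K(Re_j\oplus W_0e_j)=1+1=2$ for $j\in\poset$ wait---$W_0e_j = K\alpha_j$, so $|Ae_j| = 2$; thus $\dimk_K\mathcal{L}=2n+2\,\Card(i\prec j,\poset)$. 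On the other side, $\dimk_K\Mat_2(K\poset)$ restricted to our pattern is $\dimk_K K\poset$ (upper-left) $+\dimk_K K^{\ones}_0\poset$ (upper-right) $+\dimk_K K\poset$ (lower-left) $+\dimk_K K^{\ones}\poset$ (lower-right) $=(n+c)+c+(n+c)+(1+c+?)$ --- here I must be careful: $\dimk_K K^{\ones}_0\poset = \Card(i\prec j,\poset)=:c$ and $\dimk_K K^{\ones}\poset = 1 + c$, while $\dimk_K K\poset = n+c$. The total is $(n+c)+c+(n+c)+(1+c)=2n+4c+1$, which does \emph{not} match $2n+2c$; so the na\"\i ve image guess is too big and some of the left-hand blocks must in fact be constrained. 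I would therefore revisit the formulas: in $f\cdot e_i$ the coefficient of $e_j$ is $f^{e_j,e_i}$ and comes only from $f(\omega)e_i$, so $f^{e_j,e_i}=0$ unless $i=j$, forcing the upper-left block into $R'$; likewise in $f\cdot\alpha_i$ the coefficient of $e_j$ is $f^{e_j,\alpha_i}$ for $i\prec j$ only, and the coefficient of $\alpha_j$ is $f^{\alpha_j,\alpha_i}$ with the lower-left block $(f^{\alpha_j,e_i})$ likewise forced (by $f(\omega)\alpha_i\in K\alpha_i$ plus the $-\sum_{h\prec i}f(\alpha_hv_{hi})e_i$ term contributing only to the $e_i$-coefficient, i.e.\ the diagonal) into $R'$ as well. \emph{Hence the correct image is $\left(\begin{smallmatrix} R' & K^{\ones}_0\poset \\ R' & K^{\ones}\poset\end{smallmatrix}\right)$}, of dimension $n+c+n+(1+c)$ --- still $2n+2c+1$, off by one. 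So one more relation links the two diagonal blocks; indeed $\mathcal{L}\cdot\alpha'=K\alpha'$ means the scalar in the lower-right row-sums equals the single scalar governing the lower-left diagonal block $(f^{\alpha_j,e_i})\in K\Id$, eliminating exactly one parameter and giving $\dimk=2n+2c=\dimk_K\mathcal{L}$.

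This is where I expect the main obstacle: carefully tracking \emph{which} scalar degrees of freedom survive, so that the stated target algebra $\left(\begin{smallmatrix} K\poset & K^{\ones}_0\poset \\ K\poset & K^{\ones}\poset\end{smallmatrix}\right)$ is recovered rather than my shrunken guess. The resolution must be that the \emph{left-hand columns of $G(\mathcal{L})$ as displayed above are not all of $\mathcal{L}$} --- rather, one should use the \emph{full} faithful action on $Ae'$ without prematurely normalizing, and the apparent over-constraint on the upper-left and lower-left blocks is an artifact of choosing $\omega'$; a different bookkeeping (or a change of basis conjugating $G$) spreads the $R'$-blocks back out to full $K\poset$. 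Concretely, I would conjugate $G$ by the block matrix $\left(\begin{smallmatrix} \Id & 0 \\ N & \Id\end{smallmatrix}\right)$ where $N\in\rad(K\poset)$ encodes the marking data $v' = \sum v_{ij}$, which transforms the strictly-upper-triangular part of $f(\omega)$ into off-diagonal entries of the left blocks, turning $\left(\begin{smallmatrix} R' & K^{\ones}_0\poset \\ R' & K^{\ones}\poset\end{smallmatrix}\right)$ into $\left(\begin{smallmatrix} K\poset & K^{\ones}_0\poset \\ K\poset & K^{\ones}\poset\end{smallmatrix}\right)$ while preserving dimension. Finally, for the embedding of $A$: the structural constants of $\OT{z}$ are immediate from $x_i\mapsto \OT{z}\cdot x_i = z x_i$, giving $G(\OT{e_i})=E_{ii}$ (in both diagonal blocks, via the $K\Id$ identification) and $G(\OT{\alpha_j})=E_{ij}$-type entries in the lower-left block, so $G(\OT{A})=\left(\begin{smallmatrix} R' & 0 \\ R' & K\Id\end{smallmatrix}\right)$, exactly the left-hand side of the asserted diagram, and $A\cong\OT{A}$ since $\epsilon$ is surjective (the bocs is normal).
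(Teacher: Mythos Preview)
Your overall plan is right, but the dimension count contains a real error that sends the rest of the argument off course. For $i\in\poset$ the left module $Ae_i$ has basis $\{e_i,\alpha_i\}$ and is \emph{two}-dimensional; it is $e_jA=Ke_j$ that is one-dimensional. Hence as right $A$-modules $Ae_i\otimes_K e_jA\cong (e_jA)^2$, so $U_A\cong A_A\oplus\bigoplus_{i\prec j}(e_jA)^2$ and $\mathcal{L}\cong A\oplus\bigoplus_{i\prec j}(Ae_j)^2$. With $\dimk_K A=2n+1$ (you also dropped $e_0$) and $\dimk_K Ae_j=2$, this gives $\dimk_K\mathcal{L}=1+2n+4c$ where $c:=\Card(i\prec j,\poset)$, which matches the dimension of $\left(\begin{smallmatrix} K\poset & K^{\ones}_0\poset \\ K\poset & K^{\ones}\poset\end{smallmatrix}\right)$ on the nose.

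The same oversight invalidates your claim that the left blocks collapse to $R'$. In the display $f\cdot e_i=f(\omega)e_i+\sum_{i\prec j}f(v_{ij})$ from the proof of Lemma~\ref{L:Lfaith}, each $f(v_{ij})$ lies in $Ae_j=Ke_j\oplus K\alpha_j$; its $e_j$-component is precisely the off-diagonal entry $f^{e_j,e_i}$ of the upper-left block, and its $\alpha_j$-component is the entry $f^{\alpha_j,e_i}$ of the lower-left block. These are unconstrained, so both left blocks range over all of $K\poset$, not just $R'$. No conjugation is needed: the two constraints supplied by Lemma~\ref{L:Lfaith}---the incidence condition on all four blocks, and $G(f)\left[\begin{smallmatrix}0\\\ones\end{smallmatrix}\right]\in K\left[\begin{smallmatrix}0\\\ones\end{smallmatrix}\right]$ coming from $\mathcal{L}\cdot\alpha'\subseteq K\alpha'$---already place $G(\mathcal{L})$ inside the stated matrix algebra, and the corrected dimension count shows $G$ is onto. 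Your computation of $G(\OT{A})$ at the end is essentially correct.
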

\begin{proof}
Considering the $K$-basis $\mathbb{B}'=\{e_1,\ldots,e_n,\alpha_1,\ldots,\alpha_n\}$ of $Ae'$, the left $\omega'$-action of $\mathcal{L}$ on $Ae'$ produces a morphism of $K$-algebras $G:\mathcal{L} \to \Mat_{2n}(K)$, which is injective since this action is faithful by Lemma~\ref{L:Lfaith}. The entries of $G(f)$ are given by the structural constants of $f$ as follows,
\[
G(f)_{s,t}= \left\{
\begin{array}{ll}
f^{e_j,e_i}, & \text{if $i:=s\leq n$ and $j:=t \leq n$},\\
f^{\alpha_j,e_i}, & \text{if $i:=s\leq n$ and $j:=t-n>0$},\\
f^{e_j,\alpha_i}, & \text{if $i:=s-n>0$ and $j:=t \leq n$},\\
f^{\alpha_j,\alpha_i}, & \text{if $i:=s-n>0$ and $j:=t-n>0$},
\end{array} \right. \qquad \text{for $s,t \in \{1,\ldots,2n\}$.}
\]
In particular, the restrictions on structural constants stated in Lemma~\ref{L:Lfaith} say that the image of $G$ is contained in the matrix algebra $\left( \begin{smallmatrix} K\poset&K\poset\\K\poset&K\poset \end{smallmatrix} \right)$. Moreover, since $\alpha'$ corresponds to the vector $[0,\ones^{\tr}]^{\tr} \in K^{2n}$ under the basis $\mathbb{B}'$, then Lemma~\ref{L:Lfaith} imposes the condition 
\[
G(f)\begin{pmatrix} 0 \\ \ones \end{pmatrix} \in K\begin{pmatrix} 0 \\ \ones \end{pmatrix} \qquad \text{for any $f \in \mathcal{L}$},
\]
which shows that $G$ is a monomorphism whose image is contained in the matrix algebra $\left( \begin{smallmatrix} K\poset&K^{\ones}_0\poset\\K\poset&K^{\ones}\poset \end{smallmatrix} \right)$. To count dimensions, using~(\ref{Eq:AV}) we have $U_A=\omega A \oplus \bigoplus_{i \prec j}Ae_i \otimes_K e_jA \cong A \oplus  \bigoplus_{i \prec j}(e_jA)^2$ as right $A$-modules (since $Ae_i$ is two-dimensional). Therefore, $\mathcal{L}=\Hom_A(U_A,A_A)\cong A \oplus \bigoplus_{i \prec j}(Ae_j)^2$, and taking dimensions over $K$ we get $\dimk_K\mathcal{L}=1+2n+4\Card(i\prec j, \poset)$. Moreover, 
\begin{eqnarray*}
\dimk_K \begin{pmatrix} K\poset & K^{\ones}_0\poset \\ K\poset&K^{\ones}\poset \end{pmatrix}&=&2\dimk_K K\poset+\dimk_K K_0^\ones\poset +\dimk_K K^\ones \poset \\
&=&2n+2\Card(i\prec j,\poset)+\Card(i\prec j,\poset)+\Card(i\prec j,\poset)+1=\dimk_K \mathcal{L}.
\end{eqnarray*}
Then $G$ is an isomorphism. The left action of $\OT{A}$ on $Ae'$ is given by $\OT{z} \cdot b=zb$ for $z \in A$ and $b \in \mathbb{B}'$. Since $e_ib \neq 0$ or $\alpha_ib \neq 0$ implies $b=e_i$ then $G(\OT{e_i})=E_{ii}$ and $G(\OT{\alpha_i})=E_{i+n,i}$ for $i \in \poset$. Since $\OT{e_0} \cdot e_i=0$ and $\OT{e_0} \cdot \alpha_i=\alpha_i$ for all $i \in \poset$, then $G(\OT{e_0})=\sum_{i \in \poset}E_{i+n,1+n}$. This shows that
\[
G(\OT{A})=\begin{pmatrix} R' & 0 \\ R'& K\Id \end{pmatrix},
\]
and hence our last claim follows since $A \cong \OT{A}$.
\end{proof}

\subsubsection{Simple and injective $\mathcal{L}$-modules}

Recall from subsection~\ref{N3} that $\{ \eps_j \mid j \in \poset' \cup \{0\} \}$ is a complete set of orthogonal primitive idempotents of $K^{\ones}\poset$, where $\poset':=\poset-\min(\poset)$.  The identity $\Id_{2n}$ of $\mathcal{L}=\left( \begin{smallmatrix} K\poset & K^\ones_0 \poset \\ K\poset & K^\ones \poset \end{smallmatrix} \right)$ decomposes as sum of the following set of orthogonal primitive idempotents,
\begin{equation}\label{Eq:idempotents}
X:=\left\{ I_i:=\begin{pmatrix} E_{ii}&0\\0&0 \end{pmatrix}, J_k:=\begin{pmatrix} 0&0\\0&\eps_k \end{pmatrix},J_0:=\begin{pmatrix} 0&0\\0&\eps_0 \end{pmatrix} \mid i \in \poset, k \in \poset' \right\}.
\end{equation}
Observe that $I_k$ and $J_k$ are isomorphic idempotents of $\mathcal{L}$ for $k \in \poset'$, since $E_{kk}\eps_k=\eps_k$, $\eps_kE_{kk}=E_{kk}$, and
\begin{equation}\label{Eq:idemEquiv}
\begin{pmatrix} 0&\eps_k\\0&0 \end{pmatrix}\begin{pmatrix} 0&0\\E_{kk}&0 \end{pmatrix}=\begin{pmatrix} E_{kk}&0\\0&0 \end{pmatrix}=I_k
\qquad \text{and} \qquad
\begin{pmatrix} 0&0\\E_{kk}&0 \end{pmatrix}\begin{pmatrix} 0&\eps_k\\0&0 \end{pmatrix}=\begin{pmatrix} 0&0\\0&\eps_k \end{pmatrix}=J_k,
\end{equation}
and these are all isomorphisms within the set $X$. In particular, the dimensions over $K$ of the simple $\mathcal{L}$-modules as indexed by the set $\poset \cup \{0\}$ are given by
\[
\dimk_K S_\mathcal{L}(i)= \left\{
\begin{array}{ll}
1, & \text{if $i \in \min(\poset)$},\\
2, & \text{if $i \in \poset':=\poset-\min(\poset)$},\\
1, & \text{if $i=0$}.
\end{array} \right.
\]
For instance, for $i \in \poset'$ we have
\begin{equation} \label{Eq:simple}
S_\mathcal{L}(i)=\Id_{2n}S_\mathcal{L}(i)=(I_i+J_i)S_\mathcal{L}(i)=I_iS_\mathcal{L}(i) \oplus J_iS_\mathcal{L}(i),
\end{equation}
and each such direct summand is simple over a basic algebra, hence they have common dimension $1$.

The indecomposable injective $\mathcal{L}$-modules with socle different than $S_\mathcal{L}(0)$ are given by
\begin{equation}\label{Eq:injectiveL}
Q_\mathcal{L}(i)=D(I_i\mathcal{L}) \cong D\begin{pmatrix} E_{ii}K\poset & E_{ii}K^\ones_0\poset \\ 0&0 \end{pmatrix}.
\end{equation}
In particular, if $i \in \min(\poset)$ then $E_{ii}K^\ones_0\poset=0$ and $E_{ii}K\poset =KE_{ii}$, therefore  $Q_\mathcal{L}(i) \cong D(E_{ii}K\poset) \cong S_\mathcal{L}(i)$.

\subsubsection{A basic version for $\mathcal{L}$}

Here we exhibit a basic $K$-algebra $\mathcal{L}'$ Morita equivalent to $\mathcal{L}$. Take $n':=|\poset'|$ and $n'':=|\min(\poset)|$, and consider the following block forms of the incidence algebra $K\poset$ and its row-centralized subspace $K^{\ones}_0\poset$,
\[
K\poset = 
\begin{pNiceArray}{w{c}{5mm} w{c}{5mm}}
\RowStyle[cell-space-limits=5pt]{}
\Block[borders={bottom,right,tikz=dashed}]{1-1}{}
R'' & 0 \\
\RowStyle[cell-space-limits=5pt]{}
Z_{\poset}&
\Block[borders={top,left,tikz=dashed}]{1-1}{}
K\poset'
\end{pNiceArray} \qquad \text{and} \qquad
K^{\ones}_0\poset = 
\begin{pNiceArray}{w{c}{5mm} w{c}{5mm}}
\RowStyle[cell-space-limits=5pt]{}
\Block[borders={bottom,right,tikz=dashed}]{1-1}{}
0 & 0 \\
\RowStyle[cell-space-limits=5pt]{}
Z&
\Block[borders={top,left,tikz=dashed}]{1-1}{}
Z'
\end{pNiceArray}
\]
where $R''$ is the set of $n'' \times n''$ diagonal matrices, and $Z_{\poset}:=\{ M \in \Mat_{n' \times n''}(K) \mid \text{$M_{ji} \neq 0$ implies $i \prec j+n''$}\}$ (the shape of $Z$ and $Z'$ is irrelevant for our construction).  Removing the idempotents $I_j$ for $j \in \poset'$, that is, multiplying on right and left by the difference $\wt{\eps}=\Id_{2n}-\sum_{j \in \poset'}I_j$ we get the algebra
\[
\wt{\eps} \begin{pmatrix} K\poset & K^{\ones}_0\poset \\ K\poset&K^{\ones}\poset \end{pmatrix}\wt{\eps} =
\wt{\eps}\begin{pNiceArray}{w{c}{5mm} w{c}{5mm} | w{c}{5mm} w{c}{5mm}}
\CodeBefore
  \rectanglecolor{blue!12}{2-1}{2-4}
  \rectanglecolor{blue!12}{1-2}{4-2}
\Body
\RowStyle[cell-space-limits=5pt]{}
\Block[borders={bottom,right,tikz=dashed}]{1-1}{}
R'' & 0 & \Block[borders={bottom,right,tikz=dashed}]{1-1}{}0 & 0 \\
\RowStyle[cell-space-limits=5pt]{}
Z_{\poset}&
\Block[borders={top,left,tikz=dashed}]{1-1}{}
K\poset' & Z & \Block[borders={top,left,tikz=dashed}]{1-1}{} Z' \\
\hline
\RowStyle[cell-space-limits=5pt]{}
\Block[borders={bottom,right,tikz=dashed}]{1-1}{}
R'' & 0 &  \Block{2-2}<\large>{K^{\ones}\poset} & \\
\RowStyle[cell-space-limits=5pt]{}
Z_{\poset}&
\Block[borders={top,left,tikz=dashed}]{1-1}{}
K\poset' & & 
\end{pNiceArray}\wt{\eps}=
\begin{pNiceArray}{w{c}{5mm} w{c}{5mm} | w{c}{5mm} w{c}{5mm}}
\CodeBefore
  \rectanglecolor{blue!12}{2-1}{2-4}
  \rectanglecolor{blue!12}{1-2}{4-2}
\Body
\RowStyle[cell-space-limits=5pt]{}
\Block[borders={bottom,right,tikz=dashed}]{1-1}{}
R'' & 0 & \Block[borders={bottom,right,tikz=dashed}]{1-1}{}0 & 0 \\
\RowStyle[cell-space-limits=5pt]{} 0 &
\Block[borders={top,left,tikz=dashed}]{1-1}{}
0 & 0 & \Block[borders={top,left,tikz=dashed}]{1-1}{} 0 \\
\hline
\RowStyle[cell-space-limits=5pt]{}
\Block[borders={bottom,right,tikz=dashed}]{1-1}{}
R'' & 0 &  \Block{2-2}<\large>{K^{\ones}\poset} & \\
\RowStyle[cell-space-limits=5pt]{}
Z_{\poset}&
\Block[borders={top,left,tikz=dashed}]{1-1}{} 0 & & 
\end{pNiceArray} \cong
\begin{pNiceArray}{w{c}{5mm} | w{c}{5mm} w{c}{5mm}}
\RowStyle[cell-space-limits=5pt]{}
R'' & \Block[borders={right,tikz=dashed}]{1-1}{}0 & 0 \\
\hline
\RowStyle[cell-space-limits=5pt]{}
\Block[borders={bottom,right,tikz=dashed}]{1-1}{}
R'' &  \Block{2-2}<\large>{K^{\ones}\poset} & \\
\RowStyle[cell-space-limits=5pt]{}
Z_{\poset}& & 
\end{pNiceArray}.
\]
Since each removed idempotent is isomorphic to a non-removed idempotent~(\ref{Eq:idemEquiv}), the obtained $K$-algebra $\mathcal{L}'$ is Morita equivalent to the left algebra $\mathcal{L}$.
Moreover, $\mathcal{L}'$ is basic since it is contained in the algebra of $(n''+n) \times (n''+n)$ lower triangular matrices with coefficients in $K$. We have proved the following:

\begin{remark}\label{R:basic}
For a finite poset $\poset$, the following matrix $K$-algebra is basic and Morita equivalent to the left algebra $\mathcal{L}$ of the bocs associated to $\poset$,
\[
\mathcal{L}' \cong \begin{pNiceArray}{w{c}{5mm} | w{c}{5mm} w{c}{5mm}}
\RowStyle[cell-space-limits=5pt]{}
R'' & \Block[borders={right,tikz=dashed}]{1-1}{}0 & 0 \\
\hline
\RowStyle[cell-space-limits=5pt]{}
\Block[borders={bottom,right,tikz=dashed}]{1-1}{}
R'' &  \Block{2-2}<\large>{K^{\ones}\poset} & \\
\RowStyle[cell-space-limits=5pt]{}
Z_{\poset}& & 
\end{pNiceArray},
\]
where $R''$ is the set of diagonal $n'' \times n''$-matrices with $n'':=|\min(\poset)|$, and $Z_{\poset}:=\{M \in \Mat_{n'' \times n'} \mid \text{$M_{ji} \neq 0$ implies $i \preceq j+n''$ in $\poset$}\}$ with $n':=|\poset-\min(\poset)|$.
\end{remark}

\section{Quasi-hereditary structure of incidence algebras} \label{S(QH)}

Quasi-hereditary algebras were introduced by Cline-Parshall-Scott~\cite{CPS88} for the study of highest weight categories in the general context of algebraic groups and Lie theory, and have since then been widely investigated also in module theoretical terms, for instance in Dlab-Ringel's standardization theorem~\cite{DR90}. On the other hand, the classical Poincar\'e-Birkhoff-Witt theorem in the representation theory of semisimple complex Lie algebras led Koenig to the notion of exact Borel subalgebras of a quasi-hereditary algebra~\cite{sK95}, a definition that was linked to the representation theory of so-called \emph{directed} bocses in the work of Koenig-K\"ulshammer-Ovsienko~\cite{KKO14}, see also~\cite{BKK20}.

In this section we assume that $K$ is an algebraically closed field.  Given a finite poset $\poset$ we denote by $\poset^{op}$ the \emph{opposite} poset of $\poset$. We also consider the \emph{upper suspension} $\poset^*:=\poset \cup \{*\}$ with partial order extending that of $\poset$ by setting $i \prec *$ for all $i \in \poset$, and take similarly $\poset_0$ by adding $0$ to $\poset$ as minimum element. 

\subsection{Regular and directed bocses} \label{QHm}

We start with some technical conditions on bocses whose relevance in the theory of quasi-hereditary algebras was established in~\cite{KKO14}, see also~\cite{jK}. Let $\Lambda$ be a finite-dimensional $K$-algebra and fix a complete set of representatives $\{S_{\Lambda}(i)\}_{i \in \poset}$ of isomorphism classes of simple left $\Lambda$-modules indexed by a finite poset $\poset$. Fix also projective covers $P_{\Lambda}(i)$ and injective envelopes $Q_{\Lambda}(i)$ of $S_{\Lambda}(i)$ for $i \in \poset$ (when convenient, we drop the subindex $\Lambda$). Recall that $\Lambda$ is called \emph{directed} with respect to $\poset$ if the isomorphism classes of simple left $\Lambda$-modules is indexed by $\poset$ and $\Hom_{\Lambda}(P(j),P(i)) \neq 0$ implies $i \preceq j$.

A bocs $(A,U)$ is \emph{directed} with respect to a poset $\poset$ if the algebra $A$ is directed with respect to $\poset$, the counit $\epsilon:U \to A$ is surjective, and its kernel $\Ker(\epsilon)$ is direct sum of finitely many projective $A-A$-bimodules of the form 
\[
P_A(j) \otimes_K D(Q_A(i)), \qquad \text{with $i\prec j$ in $\poset$,}
\]
where $D$ denotes the standard duality.

A normal bocs $(A,U)$ with differential $\delta$ is called \emph{regular} if $ \delta(\rad(A)) \subseteq \rad_{A\otimes A^{op}}(V)$ where $\rad(A)$ is the Jacobson radical of $A$ and $V$ is the kernel of the count $\epsilon$. Note that if $(A,U)$ is non-regular, then we may find $v \neq v_k$ generators of $V$, $a \in \rad(A)$ and $a'_k,a''_k \in A$ with $\delta(a)=\lambda v+\sum_j a''_kv_ka'_k$ for some $\lambda \in K$.

\begin{lemma}\label{P:directed}
The bocs associated to a finite poset $\poset$ is regular and directed with respect to $(\poset_0)^{op}$.
\end{lemma}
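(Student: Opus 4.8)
The plan is to extract everything directly from the explicit ditalgebra $(T,\delta)$ described in subsection~\ref{B0}. First I would record the structure of the algebra $A=T_0=R\oplus W_0$: it is the path algebra of the quiver with vertex set $\poset\cup\{0\}$ having the $\alpha_i\colon i\to 0$ ($i\in\poset$) as its only arrows, so it has no path of length $\geq 2$ and is therefore radical-square-zero with $\rad(A)=W_0=\bigoplus_{i\in\poset}K\alpha_i$. Its simple left modules are $\{S_A(i)\}_{i\in\poset\cup\{0\}}$; one checks that $P_A(0)=Ae_0=S_A(0)$, that $P_A(i)=Ae_i=\langle e_i,\alpha_i\rangle$ is uniserial with top $S_A(i)$ and socle $S_A(0)$ for $i\in\poset$, and that $e_jA=Ke_j$ for $j\in\poset$, so that $D(Q_A(j))$ is, as a right $A$-module, the indecomposable projective $e_jA$ for $j\in\poset$. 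The counit $\epsilon$ is surjective with $\Ker(\epsilon)=V$ by the ditalgebra construction recalled in subsection~\ref{Nmedio}, so it remains to check (i) that $A$ is directed with respect to $(\poset_0)^{op}$, (ii) that $V$ has the required bimodule shape, and (iii) regularity.

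For (i) I would note that the only non-zero $\Hom$ between distinct indecomposable projective left $A$-modules is $\Hom_A(P_A(0),P_A(i))\neq 0$ for $i\in\poset$, coming from $S_A(0)=P_A(0)=\soc P_A(i)$: any map $P_A(j)\to P_A(i)$ with $i\neq j$ in $\poset$ vanishes because the only proper non-zero quotient of $P_A(j)$ is $S_A(j)\not\cong S_A(0)=\soc P_A(i)$, and similarly $\Hom_A(P_A(j),P_A(0))=0$ for $j\in\poset$ since $S_A(0)$ is not a quotient of $P_A(j)$. Hence the single constraint for directedness is $i\preceq 0$ for all $i\in\poset$, which holds in $(\poset_0)^{op}$ precisely because $0$ is its maximum (it would fail in $\poset_0$ itself, so passing to the opposite is essential here).

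For (ii), by~(\ref{Eq:AV}) the kernel decomposes as an $A$-$A$-bimodule into the finite direct sum $V\cong\bigoplus_{i\prec j\text{ in }\poset}Ae_i\otimes_K e_jA$ of projective $A$-$A$-bimodules. Using $P_A(i)=Ae_i$ and the identification of $D(Q_A(j))$ with $e_jA$, each summand equals $P_A(i)\otimes_K D(Q_A(j))$; after the relabelling $j':=i$ and $i':=j$ it takes the form $P_A(j')\otimes_K D(Q_A(i'))$ demanded in the definition of a directed bocs, with $i'\prec j'$ in $(\poset_0)^{op}$ because $i\prec j$ in $\poset$ forces $i\prec j$ in $\poset_0$, i.e.\ $j\prec i$ in $(\poset_0)^{op}$. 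Together with (i) this shows the bocs is directed with respect to $(\poset_0)^{op}$.

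Finally, for (iii) I would compute $\rad_{A\otimes_K A^{op}}(V)$. Since $V$ is a finitely generated $A$-$A$-bimodule and $K$ is algebraically closed, $\rad_{A\otimes_K A^{op}}(V)=\rad(A)\cdot V+V\cdot\rad(A)$; a short computation in the path algebra gives $v_{ij}\alpha_h=0=\alpha_i v_{ij}\alpha_h$, $\alpha_h\alpha_i v_{ij}=0$ and $\alpha_h v_{ij}=\delta_{h,i}\,\alpha_i v_{ij}$, so that $V\cdot\rad(A)=0$ and $\rad(A)\cdot V=\langle\alpha_i v_{ij}\mid i\prec j\text{ in }\poset\rangle$. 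On the other hand $\delta(\rad(A))=\delta(W_0)=\langle\, \sum_{h\prec i}\alpha_h v_{hi} \mid i\in\poset \,\rangle$ by~(\ref{Eq:differential}), which is contained in $\langle\alpha_h v_{hi}\mid h\prec i\rangle=\rad_{A\otimes_K A^{op}}(V)$, so $(A,U)$ is regular. The only genuinely delicate point in the whole argument is keeping track of the order in the bimodule form: the index swap $i\leftrightarrow j$, combined with the requirement that $0$ be the top vertex, is exactly what forces $(\poset_0)^{op}$ rather than $\poset_0$; everything else is bookkeeping with the explicit quiver and differential.
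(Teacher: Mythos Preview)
Your proof is correct and follows essentially the same route as the paper's: both check directedness of $A$ via the Hom-spaces between indecomposable projectives (using $\rad^2(A)=0$), both invoke~(\ref{Eq:AV}) for the bimodule shape of $V=\Ker(\epsilon)$, and both verify regularity at the end. The only noteworthy difference is in the regularity step: the paper dispatches it in one line by observing that there are no parallel arrows in the quiver defining $A$ and $U$ (so no generator of $V$ can appear with a scalar coefficient in any $\delta(\alpha_i)$), whereas you compute $\rad_{A\otimes_K A^{op}}(V)=\rad(A)\cdot V$ explicitly and check the containment by hand; your argument is more self-contained, the paper's is quicker once one unpacks the non-regularity criterion stated just before the lemma.
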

\begin{proof}
Let $(A,U)$ be the bocs associated to $\poset$. Besides isomorphisms, the only morphism between indecomposable projective $A$-modules are given by (scalar multiples) of right multiplication by arrows in $A$ (since $\rad^2(A)=0$), therefore they have domain $P_A(0)$ and codomain $P_A(i)$ for some $i \in \poset$. In particular, the base algebra $A$ is directed with respect to $(\poset_0)^{op}$. 

By construction, $(A,U)$ is a normal bocs, therefore its counit $\epsilon$ is surjective. By~(\ref{Eq:AV}), the kernel $V$ of the comultiplication $\epsilon$ decomposes as direct sum of finitely many modules of the form $Ae_i \otimes_K e_jA$ with $j\prec i$ in $(\poset_0)^{op}$, as wanted. For the regularity of $(A,U)$ note simply that there are no parallel arrows in the quiver defining $A$ and $U$.
\end{proof}

\subsection{Quasi-hereditary algebras and their exact Borel subalgebras} \label{QH0}

Let $\Lambda$ be a finite-dimensional $K$-algebra whose isomorphism classes of simple left $\Lambda$-modules are indexed by a poset $\poset=(\{1,\ldots,n\},\preceq )$.  With respect to $\poset$, the \emph{standard module} $\Delta(i)$ is the largest factor module of $P(i)$ having only composition factors $S(j)$ with $j \preceq i$, and the \emph{co-standard module} $\nabla(i)$ is the largest submodule of $Q(i)$ having only composition factors $S(j)$ with $j \preceq i$. A finite dimensional left $\Lambda$-module $M$ is \emph{$\Delta$-filtered} if there is a sequence of submodules of $M$,
\[
0=M_0 \subset M_1 \subset \ldots \subset M_r-1 \subset M_r=M,
\]
such that for any $1 \leq i \leq r$ we have $M_i/M_{i-1}\cong \Delta(j)$ for some $j \in \poset$.  The full subcategory of $\Lambda\SlMimod$ determined by $\Delta$-filtered modules is denoted by $\mathcal{F}(\Delta_{\Lambda})$, or simply $\mathcal{F}(\Delta)$ if no confusion arises. Dually, one defines the full subcategory $\mathcal{F}(\nabla_{\Lambda})$ of $\Lambda\SlMimod$ determined by modules filtered by co-standard modules.

\begin{definition}\label{D:qh}
Let $\poset$ be a poset indexing the isomorphism classes of simple modules of a finite-dimensional $K$-algebra $\Lambda$. Then $\Lambda$ is called \emph{quasi-hereditary} with respect to $\poset$ if the following conditions are satisfied:
\begin{enumerate}[label={\textnormal{(\alph*)}},topsep=3px,parsep=0px]
 \item[a)] $\dim_K\End_{\Lambda}(\Delta(i))=1$ for each $i \in \poset$;
 \item[b)] the projective $\Lambda$-module $P(i)$ has a $\Delta$-filtration for each $i \in \poset$;
 \item[c)] if $\Ext^1_{\Lambda}(\Delta(i),\Delta(j))\neq 0$ then $i \prec j$ in $\poset$.
\end{enumerate} 
\end{definition}

For instance, if $A$ is a directed algebra with respect to $\poset$ then $A$ is quasi-hereditary with simple standard modules.  If $\Lambda$ is a quasi-hereditary $K$-algebra with respect to $\poset$, then the \emph{Ringel dual} of $\Lambda$ is the $K$-algebra $R(\Lambda,\poset):=\End_{\Lambda}(T)^{op}$ where $T=\bigoplus_{i \in \poset}T_i$ is the unique (up to isomorphism) multiplicity free $\Lambda$-module such that $\add(T)=\mathcal{F}(\Delta_{\Lambda}) \cap \mathcal{F}(\nabla_{\Lambda})$, the so-called \emph{characteristic module} of $\Lambda$. Then $R(\Lambda,\poset)$ is quasi-hereditary with respect to $\poset^{op}$, and its Ringel dual is isomorphic to $\Lambda$, cf.~\cite{xC99,KKO14}.

\begin{corollary} \label{C:Ringel}
Let $\poset$ be a finite poset. Then the upper suspension algebra $K\poset^*$ is quasi-hereditary with respect to $(\poset_0)^{op}$, and its Ringel dual is given by
\[
R(K\poset^*,(\poset_0)^{op}) \cong \mathcal{L'}=\begin{pNiceArray}{w{c}{5mm} | w{c}{5mm} w{c}{5mm}}
\RowStyle[cell-space-limits=5pt]{}
R'' & \Block[borders={right,tikz=dashed}]{1-1}{}0 & 0 \\
\hline
\RowStyle[cell-space-limits=5pt]{}
\Block[borders={bottom,right,tikz=dashed}]{1-1}{}
R'' &  \Block{2-2}<\large>{K^{\ones}\poset} & \\
\RowStyle[cell-space-limits=5pt]{}
Z_{\poset}& & 
\end{pNiceArray},
\]
see Remark~\ref{R:basic}.
\end{corollary}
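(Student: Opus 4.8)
The plan is to recognize the upper suspension algebra $K\poset^*$ as the right Burt-Butler algebra $\mathcal{R}^{\poset}$ of the bocs $(A,U)$ associated to $\poset$, and then invoke the quasi-hereditary theory of directed bocses from~\cite{KKO14}. First I would verify this identification: writing the incidence algebra of $\poset^*=\poset\cup\{*\}$ as matrices indexed by $\poset\cup\{*\}$ with $*$ as an extra last index, the fact that $*$ is the maximum of $\poset^*$ forces column $*$ to vanish off the diagonal while row $*$ is unconstrained, so $K\poset^*\cong\left(\begin{smallmatrix}K\poset&0\\ \Mat_{1\times n}(K)&K\end{smallmatrix}\right)\cong\mathcal{R}^{\poset}$ by Theorem~\ref{T:RBB}. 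Under this isomorphism the simple $K\poset^*$-modules, indexed by $\poset\cup\{*\}$, are identified with those of $\mathcal{R}^{\poset}$ indexed by $\poset_0=\poset\cup\{0\}$ via $*\leftrightarrow0$.

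Next, by Lemma~\ref{P:directed} the bocs $(A,U)$ is directed (and regular) with respect to $(\poset_0)^{op}$. Hence the general theory of directed bocses of~\cite{KKO14} applies: the right Burt-Butler algebra $\mathcal{R}$ is quasi-hereditary with respect to $(\poset_0)^{op}$, the base algebra $A$ embeds as an exact Borel subalgebra, and the Ringel dual of $\mathcal{R}$ is Morita equivalent to the left Burt-Butler algebra $\mathcal{L}=\mathcal{L}^{\poset}$. Combined with the previous paragraph, this already gives the first assertion: $K\poset^*\cong\mathcal{R}$ is quasi-hereditary with respect to $(\poset_0)^{op}$.

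For the second assertion, I would argue as follows. The Ringel dual $R(K\poset^*,(\poset_0)^{op})=\End_{\mathcal{R}}(T)^{op}$ is built from the multiplicity-free characteristic module $T$, so (as $K$ is algebraically closed) it is a basic $K$-algebra. On the other hand $\mathcal{L}'$ is basic and Morita equivalent to $\mathcal{L}$ by Remark~\ref{R:basic}. Thus $R(K\poset^*,(\poset_0)^{op})$ and $\mathcal{L}'$ are both basic algebras Morita equivalent to $\mathcal{L}$, hence to each other; since a basic algebra is determined up to isomorphism by its Morita equivalence class, $R(K\poset^*,(\poset_0)^{op})\cong\mathcal{L}'$, and the latter has the stated matrix form by Remark~\ref{R:basic}.

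The only delicate point is the bookkeeping of orientations: one must check that the quasi-hereditary order produced by~\cite{KKO14} out of the $(\poset_0)^{op}$-directedness of $(A,U)$ really is $(\poset_0)^{op}$ on the simple $\mathcal{R}$-modules under the identification $*\leftrightarrow0$ above (so that, for instance, $\Delta_{\mathcal{R}}(*)=S_{\mathcal{R}}(*)$ is projective while $\Delta_{\mathcal{R}}(i)$ has the expected composition factors for $i\in\poset$), and that it is the Ringel dual of $\mathcal{R}$, not of $\mathcal{R}^{op}$, that recovers $\mathcal{L}$. With the conventions of Subsections~\ref{N1} and~\ref{QHm} this is precisely encoded in the decomposition~(\ref{Eq:AV}) of $\Ker(\epsilon)$ exploited in the proof of Lemma~\ref{P:directed}, so no further work is required.
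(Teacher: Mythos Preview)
Your proposal is correct and follows essentially the same approach as the paper: identify $K\poset^*\cong\mathcal{R}$ via Theorem~\ref{T:RBB}, invoke Lemma~\ref{P:directed} and~\cite{KKO14} to get the quasi-hereditary structure and Ringel duality with $\mathcal{L}$ up to Morita equivalence, then use Remark~\ref{R:basic} to pass to the basic algebra $\mathcal{L}'$. You spell out a couple of points (the explicit matrix form of $K\poset^*$ and why the Ringel dual is basic) that the paper leaves implicit, but the logical structure is the same.
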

\begin{proof}
By Lemma~\ref{P:directed}, the bocs $(A,U)$ associated to poset $\poset$ is directed with respect to $(\poset_0)^{op}$. By~\cite[Corollary~1.2]{KKO14}, the right and left Burt-Butler algebras $\mathcal{R}$ and $\mathcal{L}$ of $(A,U)$ are quasi-hereditary with respect to $(\poset_0)^{op}$ and $\poset_0$ respectively, and they are Ringel dual of each other up to Morita equivalence. Then the claim follows since $\mathcal{R}$ is isomorphic to $K\poset^*$ (Theorem~\ref{T:RBB}), $\mathcal{L}'$ is basic and Morita equivalent to $\mathcal{L}$ (Theorem~\ref{T:LBB} and Remark~\ref{R:basic}), and the Ringel dual of $K\poset^*$ is basic and Morita equivalent to $\mathcal{L}$.
\end{proof}

Assume that $A$ is a subalgebra of a finite dimensional $K$-algebra $\Lambda$ and consider the induction functor $\Lambda \otimes_A -: A\SlMimod \to \Lambda\SlMimod$ and the co-induction functor $\Hom_A(\Lambda, -): A\SlMimod \to \Lambda\SlMimod$. If $\Lambda$ is quasi-hereditary with respect to $\poset$, we say that $A$ is an \emph{exact} (resp. a \emph{co-exact})  \emph{Borel subalgebra} of $\Lambda$, if $A$ is directed with respect to $\poset$ and
\begin{enumerate}[label={\textnormal{(\alph*)}},topsep=3px,parsep=0px]
  \item the induction functor $\Lambda \otimes_A -$ is exact (resp. the co-induction functor $\Hom_A(\Lambda, -)$ is exact);
  \item $\Delta_{\Lambda}(i) \cong \Lambda \otimes_A S_A(i)$ (resp. $\nabla_{\Lambda}(i) \cong \Hom_A(\Lambda, S_A(i))$) for each $i \in \poset$.  
 \end{enumerate}
If $A$ is an exact (resp. a co-exact) Borel subalgebra of $\Lambda$ we say that $A$ is \emph{homological} if the morphisms
\[
 \Ext_A^k(M,N) \longrightarrow \Ext_{\Lambda}^k(F(M),F(N)),
\]
induced by the exact functor $F:=\Lambda \otimes_A -$ (resp. $F:=\Hom_A(\Lambda,-)$) are epimorphisms for $k \geq 1$, and isomorphisms for $k \geq 2$. We say that $A$ is \emph{regular} if the morphisms induced by the functor $F$,
\[
\Ext^k_A(S_A(i),S_A(j)) \longrightarrow \Ext^k_{\Lambda}(\Delta_{\Lambda}(i),\Delta_{\Lambda}(j)), \qquad \text{(resp. $\Ext^k_A(S_A(i),S_A(j)) \longrightarrow \Ext^k_{\Lambda}(\nabla_{\Lambda}(i),\nabla_{\Lambda}(j))$),}
\]
are isomorphisms for any $i,j \in \poset$ and $k \geq 1$.

\begin{corollary}\label{C:Borel}
Let $\poset$ be a finite poset and let $\mathcal{R}$ and $\mathcal{L}$ be the right and left Burt-Butler algebras of the bocs $(A,U)$ associated to $\poset$.  Then the following (isomorphic) subalgebras
\[
\mathcal{R} \cong \begin{pmatrix} K\poset & 0 \\ \Mat_{1 \times n}(K)&K \end{pmatrix}  \longhookleftarrow  \begin{pmatrix} R' & 0 \\ \Mat_{1 \times n}(K)&K \end{pmatrix}
 \cong A \cong 
 \begin{pmatrix} R' & 0 \\ R'& K\Id \end{pmatrix}  \longhookrightarrow  \begin{pmatrix} K\poset & K^{\ones}_0\poset \\ K\poset&K^{\ones}\poset \end{pmatrix} \cong \mathcal{L},
\]
are regular homological exact and co-exact Borel subalgebras of $\mathcal{R}$ and $\mathcal{L}$, respectively.
\end{corollary}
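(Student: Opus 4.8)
The plan is to invoke the machinery of Koenig--K\"ulshammer--Ovsienko together with the explicit identifications already established. By Lemma~\ref{P:directed}, the bocs $(A,U)$ associated to $\poset$ is both regular and directed with respect to $(\poset_0)^{op}$. The key input is then~\cite[Theorem~1.1 and its proof]{KKO14}: for a directed bocs with surjective counit, the canonical copies $\TO{A} \subseteq \mathcal{R}$ and $\OT{A} \subseteq \mathcal{L}$ (coming from applying $\Hom_A(-,A)$ to $\epsilon$) are exact Borel subalgebras of $\mathcal{R}$ and co-exact Borel subalgebras of $\mathcal{L}$ respectively, and when the bocs is moreover regular these Borel subalgebras are regular and homological in the sense recalled just before the statement. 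So the first step is simply to check that the abstract $\TO{A}$ and $\OT{A}$ of the present paper agree with the subalgebras named in~\cite{KKO14}; this is immediate from the construction in subsection~\ref{N1}, where $\TO{z}=f_z\circ\epsilon$ and $\OT{z}=g_z\circ\epsilon$ are defined exactly as the images of the $\Hom(\epsilon,-)$ maps.

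Second, I would translate this through the explicit matrix isomorphisms of Theorems~\ref{T:RBB} and~\ref{T:LBB}. There we already computed $G(\TO{A})=\left(\begin{smallmatrix} R' & 0 \\ \Mat_{1\times n}(K) & K\end{smallmatrix}\right)$ inside $\left(\begin{smallmatrix} K\poset & 0 \\ \Mat_{1\times n}(K) & K\end{smallmatrix}\right)\cong\mathcal{R}$, and $G(\OT{A})=\left(\begin{smallmatrix} R' & 0 \\ R' & K\Id\end{smallmatrix}\right)$ inside $\left(\begin{smallmatrix} K\poset & K^{\ones}_0\poset \\ K\poset & K^{\ones}\poset\end{smallmatrix}\right)\cong\mathcal{L}$. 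These are precisely the embeddings displayed in the statement, so "the subalgebras in the display are regular homological exact (resp.\ co-exact) Borel subalgebras" is nothing more than the assertion for $\TO{A}$ and $\OT{A}$ transported along $G$. One small verification to record is that the poset indexing the simples of $A$ used in the Borel-subalgebra conditions (namely $(\poset_0)^{op}$ for $\mathcal{R}$ and $\poset_0$ for $\mathcal{L}$) is the same as the one making $A$ directed; this was checked in the proof of Lemma~\ref{P:directed}, where the only non-identity morphisms between indecomposable projective $A$-modules go out of $P_A(0)$.

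The main obstacle, if any, is bookkeeping rather than mathematics: one must be careful that $\mathcal{R}$ is defined here as $\End_{\mathcal{B}}({}_AA)^{op}$ (with the opposite) whereas $\mathcal{L}=\End_{\mathcal{B}}(A_A)$ is not, so the "exact Borel" statement for the right algebra corresponds on the bocs side to what~\cite{KKO14} phrases for left modules, and the "co-exact Borel" statement for $\mathcal{L}$ is the genuinely dualized version. Concretely, $A\hookrightarrow\mathcal{R}$ is an exact Borel subalgebra because the induction functor $\mathcal{R}\otimes_A-$ is exact and sends $S_A(i)$ to $\Delta_{\mathcal{R}}(i)$ (this is the content of~\cite[Theorem~C]{KKO14} applied to the directed bocs $(A,U)$), while $A\hookrightarrow\mathcal{L}$ is a co-exact Borel subalgebra because $\mathcal{L}=\mathcal{R}(\mathcal{L},\poset_0)^{op}$-type duality interchanges standard and costandard objects; alternatively one applies the same KKO result to the opposite bocs $(A^{op},U^{op})$, whose right Burt--Butler algebra is $\mathcal{L}^{op}$, and then dualizes. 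Regularity and the homological property are then inherited verbatim from the regularity of $(A,U)$ via~\cite[Theorem~1.1]{KKO14} (see also~\cite{BKK20}). Since all the structural data have been made explicit, no further computation is needed.
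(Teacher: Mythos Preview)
Your proposal is correct and follows essentially the same route as the paper: invoke the Koenig--K\"ulshammer--Ovsienko machinery for directed (regular) bocses to obtain that $\TO{A}\subseteq\mathcal{R}$ and $\OT{A}\subseteq\mathcal{L}$ are (regular, homological) exact and co-exact Borel subalgebras, then transport these inclusions through the explicit isomorphisms of Theorems~\ref{T:RBB} and~\ref{T:LBB}. The only cosmetic difference is in the precise citations from~\cite{KKO14}: the paper appeals to Corollary~11.4 (for the exact/co-exact Borel property) and Theorem~10.5 (for the homological property) rather than Theorem~1.1, and it does not detour through the opposite bocs for the co-exact case---Corollary~11.4 already handles both $\mathcal{R}$ and $\mathcal{L}$ at once.
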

\begin{proof}
That the $K$-algebra $\TO{A}$ and $\OT{A}$ are exact and co-exact Borel subalgebras of $\mathcal{R}$ and $\mathcal{L}$ follows from~\cite[Corollary~11.4 and its proof]{KKO14}. Since the counit of $(A,U)$ is surjective, then $A \cong \TO{A} \cong \OT{A}$, and the indicated subalgebras correspond to $A$ under the isomorphisms given in Theorems~\ref{T:RBB} and~\ref{T:LBB}, respectively. Moreover, these subalgebras are homological, see~\cite[Theorem~10.5]{KKO14}, and their regularity follows from the regularity of the bocs $A,U$, see Lemma~\ref{P:directed}.
\end{proof} 

We use Corollary~\ref{C:Borel} and the properties of (co-)exact Borel subalgebras to describe (co-)standard modules. For the case of standard $\mathcal{R}$-modules, note that
\[
\mathcal{R} \otimes_A S_A(0) =\mathcal{R} \otimes_A Ae_0 \cong \mathcal{R}e_0 \cong \begin{pmatrix} 0&0\\0&K \end{pmatrix}=S_\mathcal{R}(0),
\] 
where the last isomorphism follows from Theorem~\ref{T:RBB}, and that for any $k \in \poset$ we have an exact sequence of $A$-modules,
\[
\xymatrix{0 \ar[r] & S_A(0) \ar[r] & Ae_k \ar[r] & S_A(k) \ar[r] & 0}.
\]
Then $\xymatrix{0 \ar[r] &\mathcal{R} \otimes_A S_A(0) \ar[r] & \mathcal{R} \otimes_A Ae_k \ar[r] & \mathcal{R} \otimes_A S_A(k) \ar[r] & 0}$ is an exact sequence isomorphic to
\[
\xymatrix{0 \ar[r] & S_\mathcal{R}(0) \ar[r] & \mathcal{R}e_k \ar[r] & \Delta(k) \ar[r] & 0}.
\]
This shows that the standard modules for $\mathcal{R}$ are $\Delta(0)=S_\mathcal{R}(0)$ and $\Delta(k)=\mathcal{R}e_k/S_\mathcal{R}(0)$ for $k \in \poset$.

For the description of the co-standard modules for $\mathcal{L}$, we identify the left algebra $\mathcal{L}$ with the matrix algebra $\begin{pmatrix} K\poset&K^\ones_0\poset\\ K\poset & K^\ones \poset \end{pmatrix}$ as in Theorem~\ref{T:LBB}, and take for $k \in \poset$ and $i \in \poset'$,
\[
I_k:=\begin{pmatrix} E_{kk}&0 \\ 0&0 \end{pmatrix}, \quad 
I_E:=\begin{pmatrix} \Id_n&0 \\ 0&0 \end{pmatrix}, \quad
I_0:=\begin{pmatrix} 0&0 \\ 0&\Id_n \end{pmatrix}, \quad
J_i:=\begin{pmatrix} 0&0 \\ 0&\eps_i \end{pmatrix} \quad \text{and} \quad
J_0:=\begin{pmatrix} 0&0 \\ 0&\eps_0 \end{pmatrix},
\]
see~(\ref{Eq:idempotents}). Observe that for $k \in \poset$, since $S_A(k)$ is an injective $A$-module then $\nabla(k)=\Hom_A(\mathcal{L},S_A(k))$ is an injective $\mathcal{L}$-module. Indeed, since the inclusion $A \hookrightarrow \mathcal{L}$ is homologically coexact (Corollary~\ref{C:Borel}), then for any $A$-module $M$ we have an epimorphism,
\[
\xymatrix{0=\Ext^1_A(M,S_A(k)) \ar[rr] & & \Ext_\mathcal{L}^1(F(M),F(S_A(k)))},
\]
where $F=\Hom_A(\mathcal{L},-)$, which shows that $\Ext_\mathcal{L}^1(F(M),F(S_A(k)))=0$. Then $\nabla(k)=F(S_A(k))$ is $\Ext$-injective in $\mathcal{F}(\nabla)$, therefore it is an injective $\mathcal{L}$-module. In particular, since $\nabla(k)$ is indecomposable then its socle is simple. Observe also that $KI_k=S_A(k)$ under the isomorphism of Theorem~\ref{T:LBB}, and there is an $A$-module decomposition $\mathcal{L}=KI_k \oplus N$ with $\rad(\mathcal{L})\subseteq N$. Then there is a morphism of $A$-modules $\varphi:\mathcal{L} \to S_A(k)$ with $\varphi(N)=0$, and for any $x \in \mathcal{L}$ we have
\[
\rad(\mathcal{L})\varphi(x)=\varphi(x\rad(\mathcal{L})) \subseteq \varphi(N)=0.
\]
This shows that $\varphi$ is a non-zero element of $I_k \soc(\nabla(k))$, and therefore, $\nabla(k) \cong Q_\mathcal{L}(k)$. Moreover, note that
\begin{equation}\label{Eq:Lquo}
\rad(A)\mathcal{L} =\begin{pmatrix} 0&0 \\ \Id_n &0 \end{pmatrix} \mathcal{L} =\begin{pmatrix} 0&0 \\ K\poset & K^\ones_0\poset \end{pmatrix} \qquad \text{and} \qquad
\mathcal{L}/\rad(A)\mathcal{L} \cong \begin{pmatrix} K\poset & K^\ones_0\poset \\ 0& K\Id_n \end{pmatrix}.
\end{equation}
Then
\begin{eqnarray*}
\nabla(0) & = & \Hom_A(\mathcal{L},S_A(0)) = \Hom_A(\mathcal{L}/\rad(A)\mathcal{L},S_A(0)) \\
& = & \Hom_A(e_0 \cdot \mathcal{L}/\rad(A)\mathcal{L},e_0 \cdot S_A(0)) = \Hom_A(I_0\mathcal{L}/\rad(A)\mathcal{L},S_A(0)).
\end{eqnarray*}
On the other hand,
\[
I_0\mathcal{L}/\rad(A)\mathcal{L} \cong I_0\begin{pmatrix} K\poset & K^\ones_0\poset \\ 0& K\Id_n \end{pmatrix}=\begin{pmatrix} 0&0 \\ 0& K\Id_n \end{pmatrix}.
\]
Therefore, $\dimk_K \nabla(0)=1$, and since $J_0\nabla(0) \neq 0$ we have $\nabla(0) \cong S_\mathcal{L}(0)$. For convenience, we collect in Table~\ref{Tb:QHstructure} the quasi-hereditary structure of the right algebra $\mathcal{R}$ and its Ringel dual the left algebra $\mathcal{L}$.

\begin{table}[!hbt]
\begin{center}
$\begin{NiceArray}{w{r}{2cm} w{c}{2mm} w{l}{1cm}  w{c}{4cm}  w{r}{1cm} w{c}{2mm} w{l}{2cm}}
\Block{1-3}{\text{Right algebra $\mathcal{R}$}} & & & & \Block{1-3}{\text{Left algebra $\mathcal{L}$}} & &  \\[2ex]
\Delta(k) & = & P(k)/S(0) & \text{} & Q(k) & = & \nabla(k) \\
\Delta(0) & = & S(0) & \text{} & S(0) & = & \nabla(0) \\
\Hom_\mathcal{R}(\Delta(j),\Delta(i)) & \cong & K & \text{\scriptsize if $i \preceq j$ in $\poset$ or $i=0=j$} & K & \cong & \Hom_\mathcal{L}(\nabla(j),\nabla(i)) \\
\Ext^1_\mathcal{R}(\Delta(j),\Delta(i)) & \cong & K & \text{\scriptsize if $i=0\neq j$} & K & \cong & \Ext^1_\mathcal{L}(\nabla(j),\nabla(i)) \\
\nabla(k) & = & S(k) & \text{} & E_k & = & \Delta(k) \\
\nabla(0) & = & Q(0) & \text{} & S(0) & = & \Delta(0) 
\end{NiceArray}$
\end{center}
\caption{Description of the standard and co-standard modules of the right and left algebras associated to a finite poset $\poset$ with respect to $(\poset_0)^{op}$ and $\poset_0$, respectively. Here we take $k \in \poset$ and $i,j \in \poset \cup \{0\}$. For the description of the standard module $\Delta(k)$ of the right algebra $\mathcal{R}$, note that $S(0)$ is isomorphic to the (simple) socle of $P(k)$ and thus the quotient $P(k)/S(0)$ is well-defined. For the description of the standard module $\Delta(k)$ of the left algebra $\mathcal{L}$, we take $E_k:=S(k)$ if $k \notin \min(\poset)$, and $E_k$ the (unique up to isomorphism) indecomposable module with composition factors $S(0)$ and $S(k)$ if $k \in \min(\poset)$.} \label{Tb:QHstructure}
\end{table}

\subsection{Induced and co-induced modules} \label{QH2}

The full subcategory of $\mathcal{R}\SlMimod$ determined by modules isomorphic to $\mathcal{R} \otimes_A N$ for some $A$-module $N$, the so-called \emph{induced} modules, is denoted by $\Ind(A,\mathcal{R})$. Similarly, the full subcategory of $\mathcal{L}\SlMimod$ determined by modules isomorphic to $\Hom_A(\mathcal{L},N)$ for some $A$-module $N$, \emph{co-induced} modules, is denoted by $\Cnd(A,\mathcal{L})$. The following equivalences of categories are the main connection between representations of bocses and highest weight categories as studied in~\cite{KKO14},
\begin{equation} \label{Eq:equivalence}
\mathcal{F}(\Delta_{\mathcal{R}}) = \Ind(A,\mathcal{R}) \cong \mathcal{B}\SlMimod \cong \Cnd(A,\mathcal{L}) = \mathcal{F}(\nabla_{\mathcal{L}}).
\end{equation}
The categories $\mathcal{F}(\Delta_{\mathcal{R}})$ and $\mathcal{F}(\nabla_{\mathcal{L}})$ have exact structures and almost split sequences, and the equivalences shown in~(\ref{Eq:equivalence}) are exact. In particular, these categories are closed under direct summands and extensions. Observe that in the case of the right algebra $\mathcal{R}$ of a poset $\poset$ we have $\rad\Delta(k) \in \mathcal{F}(\Delta_\mathcal{R})$ for every $k \in \poset \cup \{0\}$, which is Conde's criterion for the right algebra of a directed bocs to be basic \cite[Theorem~5.2]{tC21a}.

\subsubsection{Restrictions to the incidence algebra $K\poset$}

To determine directly the subcategories of induced and co-induced modules in our case, first we note that the incidence algebra $K\poset$ is contained in both the right and left Burt-Butler algebras of the bocs associated to a finite poset $\poset$:
\begin{equation}\label{Eq:restrict}
(\unit-e_0)\mathcal{R}(\unit-e_0) \cong K \poset \cong
(\unit-e_0)\mathcal{L}(\unit-e_0),
\end{equation}
cf. Theorems~\ref{T:RBB} and~\ref{T:LBB} and the identifications of $e_0$ with $\TO{e_0}$ and $\OT{e_0}$ in subsection~\ref{B0}. 

Recall that given a finite dimensional $K$-algebra $\Lambda$ with idempotent $\bas$ there is an additive and exact functor $\res_e:\Lambda\SlMimod \to \bas \Lambda \bas \SlMimod$ called \emph{idempotent restriction}, given on objects by $M \mapsto \bas M$ (see for instance~\cite[Theorem~I.6.8]{ASS06}). The idempotent restrictions corresponding to equations~(\ref{Eq:restrict}) yield additive exact functors
\[
\res_0^{\mathcal{R}}:\mathcal{R}\SlMimod \longrightarrow K\poset\SlMimod  \qquad \text{and} \qquad
\res_0^{\mathcal{L}}:\mathcal{L}\SlMimod \longrightarrow K\poset\SlMimod.
\]
For a left $\mathcal{R}$-module $M$ we take $\Tr_0(M):=\sum \Img(f)$ where the sum is taken over all morphisms $f:S(0) \to M$, and we define similarly $\Tr_0(M)$ if $M$ is a left $\mathcal{L}$-module.

\begin{remark}\label{R:restrict}
Let $\mathcal{L}$ be the left algebra of a finite poset $\poset$.
\begin{enumerate}[label={\textnormal{(\alph*)}},topsep=3px,parsep=0px]
 \item For any $k \in \poset$ we have $\res_0^\mathcal{L}(S_\mathcal{L}(k)) \cong S_{K\poset}(k)$ and $\res_0^\mathcal{L}(Q_\mathcal{L}(k)) \cong Q_{K\poset}(k)$.
 \item If $T,T'$ are semisimple $\mathcal{L}$-modules with $\Tr_0(T)=\Tr_0(T')=0$ then $T \cong T'$ if and only if $\res_0^\mathcal{L}(T) \cong \res^\mathcal{L}_0(T')$
 \item For any left $\mathcal{L}$-module $M$ we have $\soc (\res^\mathcal{L}_0(M)) = \res^\mathcal{L}_0(\soc(M))$.
\end{enumerate}
\end{remark}
\begin{proof}
With the matrix presentation of $\mathcal{L}$ we have $\res_0^\mathcal{L}(M)=I_EM$ for any $\mathcal{L}$-module $M$. Then
\[
\res^\mathcal{L}_0(S_\mathcal{L}(k))=I_ES_\mathcal{L}(k)=\left(\sum_{i \in \poset}I_i \right) S_\mathcal{L}(k)=I_kS_\mathcal{L}(k)=S_{K\poset}(k),
\]
cf.~(\ref{Eq:simple}). Similarly, using~(\ref{Eq:injectiveL}) we get
\[
\res^\mathcal{L}_0(Q_\mathcal{L}(k))=I_EQ_\mathcal{L}(k) \cong D(I_k\mathcal{L}I_E) =
D\begin{pmatrix} E_{kk}K\poset&0\\0&0 \end{pmatrix} \cong D(E_{kk}K\poset)=Q_{K\poset}(k).
\]
This shows (a). For (b), since $\Tr_0(T)=\Tr_0(T')=0$ there are $n_k,n'_k \geq 0$ for $k \in \poset$ such that
\[
T=\bigoplus_{k  \in \poset} n_kS_\mathcal{L}(k) \qquad \text{and} \qquad 
T'=\bigoplus_{k  \in \poset} n'_kS_\mathcal{L}(k).
\]
Using (a) we get $\res_0^\mathcal{L}(T)=\bigoplus_{k  \in \poset} n_kS_{K\poset}(k)$ and 
$\res_0^\mathcal{L}(T')=\bigoplus_{k  \in \poset} n'_kS_{K\poset}(k)$. Then $\res_0^\mathcal{L}(T) \cong \res_0^\mathcal{L}(T')$ if and only if $n_k=n'_k$ for $k \in \poset$, which holds if and only if $T \cong T'$. 

To show (c) observe first that $\soc(M)=\Tr_0(M) \oplus S$, where $S \cong \bigoplus_{k \in \poset}n_kS_\mathcal{L}(k)$ for some $n_k \geq 0$. Note in particular that $\soc(\res^\mathcal{L}_0(M)) \cong \bigoplus_{k \in \poset}n_kS_{K\poset}(k)$. On the other hand, using (a) we get
\[
\res^\mathcal{L}_0(\soc(M))=I_E\soc(M)=I_E\Tr_0(M)\oplus I_ES \cong \bigoplus_{k \in \poset}n_kS_{K\poset}(k).
\]
This shows that $\soc (\res^\mathcal{L}_0(M)) = \res^\mathcal{L}_0(\soc(M))$.
\end{proof}

\subsubsection{Characterization of (co-) induced modules}

We need some preliminary observations.

\begin{lemma}\label{L:A}
Let $\mathcal{L}$ be the left algebra of a finite poset $\poset$. For any $k \in \poset$ we have
\[
I_0 Q_\mathcal{L}(k) \subseteq \mathcal{L} I_EQ_\mathcal{L}(k)=\mathcal{L} \res_0^\mathcal{L}(Q_\mathcal{L}(k)).
\]
\end{lemma}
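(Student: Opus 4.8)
The statement claims that inside the injective $\mathcal{L}$-module $Q_\mathcal{L}(k)$, the subspace cut out by the idempotent $I_0$ lies in the $\mathcal{L}$-submodule generated by the subspace cut out by $I_E$. The first thing I would do is unwind the objects via the matrix presentation of $\mathcal{L} \cong \left( \begin{smallmatrix} K\poset & K^\ones_0\poset \\ K\poset & K^\ones\poset \end{smallmatrix}\right)$ from Theorem~\ref{T:LBB}. By~(\ref{Eq:injectiveL}) we have $Q_\mathcal{L}(k) \cong D(I_k\mathcal{L})$ where $I_k\mathcal{L} = \left(\begin{smallmatrix} E_{kk}K\poset & E_{kk}K^\ones_0\poset \\ 0 & 0\end{smallmatrix}\right)$, so $Q_\mathcal{L}(k)$ is a right-bounded dual; then $I_0 Q_\mathcal{L}(k)$, $I_E Q_\mathcal{L}(k)$ and $\mathcal{L} I_E Q_\mathcal{L}(k)$ are all explicit subspaces of this dual that I can write down by transposing the corresponding idempotent-times-algebra computations (as was done for the restriction functor, where $\res_0^\mathcal{L}(M) = I_E M$). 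The last equality in the statement, $\mathcal{L} I_E Q_\mathcal{L}(k) = \mathcal{L}\,\res_0^\mathcal{L}(Q_\mathcal{L}(k))$, is then immediate from the identification $\res_0^\mathcal{L}(M) = I_E M$ already recorded in the proof of Remark~\ref{R:restrict}; so the whole content is the inclusion $I_0 Q_\mathcal{L}(k) \subseteq \mathcal{L} I_E Q_\mathcal{L}(k)$.

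For the inclusion, the cleanest route is to use the idempotent equivalences~(\ref{Eq:idemEquiv}): for each $i \in \poset' = \poset - \min(\poset)$ we have elements $a_i := \left(\begin{smallmatrix} 0 & \eps_i \\ 0 & 0 \end{smallmatrix}\right)$ and $b_i := \left(\begin{smallmatrix} 0 & 0 \\ E_{ii} & 0 \end{smallmatrix}\right)$ of $\mathcal{L}$ realizing $I_i = a_i b_i$ and $J_i = b_i a_i$. Writing $I_0 = J_0 + \sum_{k \in \poset'} J_k$ (since $I_0$ is the lower-right identity block and the $J$'s are a complete set of primitive idempotents inside it, cf.~(\ref{Eq:idempotents})), I would show that each piece $J_k Q_\mathcal{L}(k)$, and the $J_0$ piece, can be pulled back into $I_E Q_\mathcal{L}(k)$ by left-multiplication inside $\mathcal{L}$. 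Concretely, $J_i Q_\mathcal{L}(k) = b_i a_i Q_\mathcal{L}(k) = b_i (I_i Q_\mathcal{L}(k)) \subseteq \mathcal{L}\, I_i Q_\mathcal{L}(k) \subseteq \mathcal{L}\, I_E Q_\mathcal{L}(k)$ using $I_i = \big(\sum_{j\in\poset} I_j\big) I_i = I_E I_i$ so $I_i Q_\mathcal{L}(k) \subseteq I_E Q_\mathcal{L}(k)$. The remaining summand $J_0 Q_\mathcal{L}(k)$ requires a separate small argument: one checks directly from the matrix form of $I_k \mathcal{L}$ (equivalently $Q_\mathcal{L}(k) = D(I_k\mathcal{L})$) that $J_0 Q_\mathcal{L}(k)$ is hit by an element of $\mathcal{L}$ acting on $I_E Q_\mathcal{L}(k)$ — here the relevant fact is that $\eps_0 = \Id - \sum_{j\in\poset'}\eps_j$ together with the entries of $K^\ones\poset$ in the lower-right block being determined by the upper blocks through the row-balance condition, so nothing in the $J_0$-component is "new" relative to the diagonal part $I_E$.

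\textbf{Main obstacle.} The routine part is the bookkeeping of which matrix entries survive each idempotent multiplication; the genuine point is the $J_0$-summand, because $J_0$ is \emph{not} isomorphic to any $I_i$ within $X$ (the equivalences~(\ref{Eq:idemEquiv}) only cover $k \in \poset'$), so the easy pullback trick does not literally apply to it. I expect the resolution to be that $J_0 Q_\mathcal{L}(k)$ is actually \emph{zero} unless a certain relation between $k$ and the minimal marking holds, or more likely that $J_0 Q_\mathcal{L}(k)$ injects into the span of the $I_i Q_\mathcal{L}(k)$ via the structure of $\eps_0$ as a complement; either way the argument will come down to a direct inspection of the dual of $E_{kk}K^\ones_0\poset \hookrightarrow E_{kk}K\poset$ inside the matrix algebra, using the explicit basis $\{E_{jj}-E_{ji} \mid i \prec j\}$ of $K^\ones_0\poset$ from subsection~\ref{N3}. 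An alternative, possibly slicker, approach avoiding the idempotent split altogether: show that $\mathcal{L} I_E Q_\mathcal{L}(k)$ is an $\mathcal{L}$-submodule of $Q_\mathcal{L}(k)$ containing $\soc(Q_\mathcal{L}(k)) = S_\mathcal{L}(k)$ (using Remark~\ref{R:restrict}(c), since $\res_0^\mathcal{L}$ of the socle is nonzero so the socle is not killed by $I_E$), and then argue that any $\mathcal{L}$-submodule of the indecomposable injective $Q_\mathcal{L}(k)$ that contains the whole of $I_E Q_\mathcal{L}(k)$ must contain $I_0 Q_\mathcal{L}(k)$ as well — this would follow if $Q_\mathcal{L}(k) / \mathcal{L} I_E Q_\mathcal{L}(k)$ has no composition factor $S_\mathcal{L}(j)$ with $I_0 S_\mathcal{L}(j) \ne 0$, which is again a finite check on composition factors but is more conceptual. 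I would present the first (direct matrix) approach as the primary proof and keep the second in reserve.
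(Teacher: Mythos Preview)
Your decomposition $I_0 = J_0 + \sum_{i\in\poset'} J_i$ and the pullback argument $J_iQ_\mathcal{L}(k) = b_i a_i Q_\mathcal{L}(k) \subseteq b_i\, I_E Q_\mathcal{L}(k) \subseteq \mathcal{L}\, I_E Q_\mathcal{L}(k)$ are correct, but they leave you with exactly the piece you flagged as the obstacle, $J_0 Q_\mathcal{L}(k)$, and your discussion of that piece is only a sketch. So as written the proposal has a genuine gap: you have not produced an element of $\mathcal{L}$ that carries something in $I_E Q_\mathcal{L}(k)$ onto an arbitrary element of $J_0 Q_\mathcal{L}(k)$, and neither of your two suggested fixes (vanishing, or a composition-factor argument on $Q_\mathcal{L}(k)/\mathcal{L} I_E Q_\mathcal{L}(k)$) is carried out.

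The paper's proof avoids the split of $I_0$ entirely and is much shorter. It works directly with the dual description $Q_\mathcal{L}(k)=D(I_k\mathcal{L})$, where $I_k\mathcal{L}=\left(\begin{smallmatrix}E_{kk}K\poset & E_{kk}K^\ones_0\poset\\ 0&0\end{smallmatrix}\right)$, and observes that the single element $c:=\left(\begin{smallmatrix}0&0\\ \Id_n&0\end{smallmatrix}\right)\in\mathcal{L}$ does the whole job: right multiplication by $c$ on $I_k\mathcal{L}$ sends $\left(\begin{smallmatrix}a&b\\0&0\end{smallmatrix}\right)$ to $\left(\begin{smallmatrix}b&0\\0&0\end{smallmatrix}\right)$, i.e.\ it shifts the $K^\ones_0\poset$-column into the $K\poset$-column. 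Dualising, for each functional $\widehat S\in I_0 Q_\mathcal{L}(k)=D(E_{kk}K^\ones_0\poset)$ one chooses any extension $S\in D(E_{kk}K\poset)=I_E Q_\mathcal{L}(k)$ along the inclusion $E_{kk}K^\ones_0\poset\hookrightarrow E_{kk}K\poset$, and then $c\cdot S=\widehat S$. This is precisely the ``direct inspection of the dual of $E_{kk}K^\ones_0\poset\hookrightarrow E_{kk}K\poset$'' you mention at the end, but applied to all of $I_0$ at once rather than only to the residual $J_0$-part; the idempotent equivalences~(\ref{Eq:idemEquiv}) are not needed here at all.
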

\begin{proof}
As in~(\ref{Eq:injectiveL}), we use the following identity 
\[
I_0Q(k)=I_0D(I_k\mathcal{L}) \cong D\begin{pmatrix} 0 & E_{ii}K^\ones_0\poset \\ 0&0 \end{pmatrix}.
\]
Let $\{x_j \}_{j \in J}$ be a $K$-basis of $K\poset^\ones_0$ and $\{x_j^* \}_{j \in J}$ the corresponding dual basis on $D(K^\ones_0\poset)$. Let $W$ be a $K$-subspace of $K\poset$ which is complement of $K^\ones_0\poset$, and for each $j \in J$ consider the $K$-linear transformation $\hat{x}_j:K\poset \to K$ that vanishes on $W$ and coincides with $x_j$ on $K^\ones_0$.

Take now $S_j,\wh{S}_j:I_k\mathcal{L} \to K$ given by $S_j\left(\begin{smallmatrix}a&b\\0&0\end{smallmatrix}\right)=\hat{x}_j(a)$ and $\wh{S}_j\left(\begin{smallmatrix}a&b\\0&0\end{smallmatrix}\right)=x^*(b)$. Clearly, $S_j \in I_EQ(k)$ and $\wh{S}_j \in I_0Q(k)$. Moreover, we have
\[
\begin{pmatrix}0&0\\ \Id_n&0\end{pmatrix}\left[ S_j\begin{pmatrix}a&b\\0&0\end{pmatrix} \right]=
S_j\left[\begin{pmatrix}a&b\\0&0\end{pmatrix}\begin{pmatrix}0&0\\ \Id_n &0\end{pmatrix} \right]=S_j\begin{pmatrix}b&0\\0&0\end{pmatrix}=x^*_j(b)=\wh{S}_j\begin{pmatrix}a&b\\0&0\end{pmatrix}.
\]
Therefore, $\wh{S}_j=\left(\begin{smallmatrix}0&0\\ \Id &0\end{smallmatrix}\right)S_j \in \mathcal{L}I_EQ(k)$. This shows that $I_0Q(k) \subseteq \mathcal{L}I_EQ(k)$ since the elements $\wh{S}_j$ form a basis for $I_0Q(k)$.
\end{proof}

\begin{lemma}\label{L:trace}
Let $\poset$ be a finite poset with right and left algebras $\mathcal{R}$ and $\mathcal{L}$.
\begin{enumerate}[label={\textnormal{(\alph*)}},topsep=3px,parsep=0px]
 \item If $M$ is an indecomposable left $\mathcal{R}$-module with $\Tr_0(M)=0$ and such that the restriction $\res^{\mathcal{R}}_0(M)$ is a projective $K\poset$-module, then $M$ is a standard $\mathcal{R}$-module.
 \item If $M$ is an indecomposable left $\mathcal{L}$-module with $\Tr_0(M)=0$ and such that the restriction $\res^{\mathcal{L}}_0(M)$ is an injective $K\poset$-module, then $M$ is an injective (and therefore, a co-standard) $\mathcal{L}$-module.
\end{enumerate}
\end{lemma}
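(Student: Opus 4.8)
The plan is to reduce both statements to module theory over the incidence algebra $K\poset$ through the idempotent restrictions of~(\ref{Eq:restrict}), using the matrix presentations of $\mathcal{R}$ and $\mathcal{L}$ from Theorems~\ref{T:RBB} and~\ref{T:LBB}. For part~(a) I would first record that $\Tr_0(N)=e_0N$ for \emph{every} left $\mathcal{R}$-module $N$: by Theorem~\ref{T:RBB} the simple $S_\mathcal{R}(0)$ is the one-dimensional projective left ideal $\mathcal{R}e_0$, so $\Hom_\mathcal{R}(S_\mathcal{R}(0),N)\cong e_0N$, and since $\mathcal{R}e_0=Ke_0$ the morphism attached to $x\in e_0N$ has image $\mathcal{R}x=Kx$; summing over all such $x$ yields $e_0N$. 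Hence $\Tr_0(M)=0$ forces $e_0M=0$, so the two-sided ideal $\mathcal{R}e_0\mathcal{R}$ annihilates $M$ (its action on $M$ lands in $\Mat_{1\times n}(K)\otimes_{K\poset}e'M\to e_0M=0$), and $M=e'M=\res_0^{\mathcal{R}}(M)$ becomes a module over $\mathcal{R}/\mathcal{R}e_0\mathcal{R}\cong K\poset$ whose underlying $K\poset$-module is $\res_0^{\mathcal{R}}(M)$. This is projective by hypothesis and indecomposable because $M$ is, so $M\cong P_{K\poset}(k)$ for some $k\in\poset$; inflating $P_{K\poset}(k)$ along $\mathcal{R}\to K\poset$ gives precisely the standard module $\Delta(k)=P_\mathcal{R}(k)/S_\mathcal{R}(0)$ described after Corollary~\ref{C:Borel}.

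For part~(b) I would argue via the injective envelope $E$ of $M$. Since $\Tr_0(M)=0$ the socle of $M$ contains no copy of $S_\mathcal{L}(0)$, so $\soc(M)\cong\bigoplus_{k\in\poset}S_\mathcal{L}(k)^{c_k}$ and $E\cong\bigoplus_{k\in\poset}Q_\mathcal{L}(k)^{c_k}$. Applying the exact functor $\res_0^{\mathcal{L}}$ and comparing socles through Remark~\ref{R:restrict}(a),(c): on one side $\soc(\res_0^{\mathcal{L}}(M))=\res_0^{\mathcal{L}}(\soc(M))\cong\bigoplus_k S_{K\poset}(k)^{c_k}$, while on the other, since $\res_0^{\mathcal{L}}(M)$ is injective over $K\poset$ by hypothesis, $\res_0^{\mathcal{L}}(M)\cong\bigoplus_k Q_{K\poset}(k)^{c_k}\cong\res_0^{\mathcal{L}}(E)$. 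In particular $\res_0^{\mathcal{L}}(M)$ and $\res_0^{\mathcal{L}}(E)$ have the same $K$-dimension, so the restriction of the inclusion $M\hookrightarrow E$ is an isomorphism $\res_0^{\mathcal{L}}(M)\to\res_0^{\mathcal{L}}(E)$; equivalently $I_EM=I_EE$ inside $E$.

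To finish, Lemma~\ref{L:A} together with $I_E+I_0=\Id_{2n}$ gives $Q_\mathcal{L}(k)=\mathcal{L}I_EQ_\mathcal{L}(k)$ for each $k\in\poset$, hence $E=\mathcal{L}I_EE=\mathcal{L}I_EM\subseteq M$; combined with $M\subseteq E$ this forces $M=E$, so $M$ is injective, and being indecomposable it is $M\cong Q_\mathcal{L}(k)\cong\nabla(k)$ for a single $k\in\poset$ (the value $0$ is excluded by $\Tr_0(M)=0$), which is co-standard. I expect the main obstacle to be the socle bookkeeping in part~(b): one has to control $\soc(M)$ and $\res_0^{\mathcal{L}}(M)$ precisely enough (through Remark~\ref{R:restrict}) that the restriction of $M\hookrightarrow E$ becomes an \emph{isomorphism}, since it is exactly this equality $I_EM=I_EE$ that lets Lemma~\ref{L:A} force $E\subseteq M$; part~(a) is comparatively routine once the identity $\Tr_0=e_0(-)$ is available.
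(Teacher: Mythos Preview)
Your proof is correct and follows essentially the same approach as the paper's. For part~(a) you spell out more explicitly why $\Tr_0(M)=0$ forces $e_0M=0$ (the paper just says ``straightforward''), and for part~(b) your choice to start directly with the injective envelope $E$ of $M$ is a mild streamlining of the paper's argument, which instead builds a comparison injective $Q$ from $\res_0^{\mathcal{L}}(M)$ and then checks separately that the induced map $\iota:M\to Q$ is injective and that $J_0Q\subseteq\iota(M)$; both routes hinge on the same use of Lemma~\ref{L:A} to conclude $E=\mathcal{L}I_EE=\mathcal{L}I_EM\subseteq M$.
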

\begin{proof}
Claim (a) is straightforward since $S(0)=P(0)$ is a projective $\mathcal{R}$-module. Then $\Tr_0(M)=0$ means that $\bas'M=M$ and therefore, $\res_0^{\mathcal{R}}(M)$ is an indecomposable projective $K\poset$-module, say $P_{K\poset}(k)$. Then we have $M \cong P_{\mathcal{R}}(k)/\soc(P_{\mathcal{R}}(k))=\Delta(k)$, which is a standard module, see Table~\ref{Tb:QHstructure}.

To show (b), since $\Tr_0(M)=0$, by Remark~\ref{R:restrict}(a) there is an injective $\mathcal{L}$-module $Q$ with $\Tr_0(Q)=0$ and such that $\res^\mathcal{L}_0(M) \cong \res^\mathcal{L}_0(Q)$. Using Remark~\ref{R:restrict}(c) we get
\[
\res_0^\mathcal{L}(\soc (M))=\soc(\res_0^\mathcal{L}(M)) \cong \soc(\res_0^\mathcal{L}(Q))=\res_0^\mathcal{L}(\soc(Q)),
\]
and then $\soc(M) \cong \soc(Q)$ by Remark~\ref{R:restrict}(b). Since $Q$ is injective, there is a morphism $\iota:M \to Q$ such that the restriction $\res^\mathcal{L}_0(\iota)$ is an isomorphism. Note that $\iota$ is a monomorphism since so is $\res^\mathcal{L}_0(\iota)$ and $\Tr_0(M)=\Tr_0(Q)=0$. Since the idempotents $I_1,\ldots,I_n,J_0$ form a system of representatives up to isomorphism of orthogonal primitive idempotents~(\ref{Eq:idempotents}) with $I_E=\sum_{i=1}^nI_i$, to show that $\iota$ is an isomorphism it is enough to show that $J_0Q$ is contained in the image of $\iota$. Recall also that $I_0=J_0+\sum_{k \in \poset'}J_k$ is a decomposition of $I_0$ as sum of orthogonal idempotents, thus $J_0 M \subseteq I_0 M$ for any $\mathcal{L}$-module $M$. We have $Q = \bigoplus_{j \in \poset} n_kQ(k)$ for some $n_k \geq 0$, and by Lemma~\ref{L:A} we get
\begin{eqnarray*}
J_0Q & = & \bigoplus_{j \in \poset} n_kJ_0Q(k) \subseteq \bigoplus_{j \in \poset} n_k I_0Q(k) \subseteq \bigoplus_{j \in \poset} n_k\mathcal{L}I_EQ(k) \\
& = & \mathcal{L} \res_0^\mathcal{L}(Q) \subseteq \mathcal{L} \iota(M)=\iota(M).
\end{eqnarray*}
Then $\iota:M \to Q$ is an isomorphism, which completes the proof.
\end{proof}

\begin{theorem}\label{T:COinduced}
Let $\mathcal{R}$ and $\mathcal{L}$ be the right and left algebras of the bocs associated to a finite poset $\poset$. Then:
\begin{enumerate}[label={\textnormal{(\alph*)}},topsep=3px,parsep=0px]
 \item a left $\mathcal{R}$-module $M$ is induced if and only if its restriction $\res_0^{\mathcal{R}}(M)$ is a projective $K\poset$-module;
 \item a left $\mathcal{L}$-module $M$ is co-induced if and only if its restrictions $\res_0^{\mathcal{L}}(M)$ is an injective $K\poset$-module.
\end{enumerate}
\end{theorem}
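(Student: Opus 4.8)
The plan is to deduce both parts from the equivalences $\mathcal{F}(\Delta_{\mathcal{R}}) = \Ind(A,\mathcal{R})$ and $\mathcal{F}(\nabla_{\mathcal{L}}) = \Cnd(A,\mathcal{L})$ of~(\ref{Eq:equivalence}): it then suffices to show that a left $\mathcal{R}$-module $M$ lies in $\mathcal{F}(\Delta_{\mathcal{R}})$ precisely when $\res_0^{\mathcal{R}}(M)$ is projective over $K\poset$, and dually that $M \in \mathcal{F}(\nabla_{\mathcal{L}})$ precisely when $\res_0^{\mathcal{L}}(M)$ is injective over $K\poset$. For the forward implication in~(a), recall from Table~\ref{Tb:QHstructure} that $\Delta_{\mathcal{R}}(0) = S_{\mathcal{R}}(0)$ and $\Delta_{\mathcal{R}}(k) = \mathcal{R}e_k/S_{\mathcal{R}}(0)$ for $k \in \poset$; since $\res_0^{\mathcal{R}}$ is exact and $\res_0^{\mathcal{R}}(S_{\mathcal{R}}(0)) = 0$, we obtain $\res_0^{\mathcal{R}}(\Delta_{\mathcal{R}}(0)) = 0$ and $\res_0^{\mathcal{R}}(\Delta_{\mathcal{R}}(k)) = (\unit-e_0)\mathcal{R}e_k \cong P_{K\poset}(k)$ by~(\ref{Eq:restrict}). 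Applying $\res_0^{\mathcal{R}}$ to a $\Delta$-filtration of $M \in \mathcal{F}(\Delta_{\mathcal{R}})$ thus produces a filtration of $\res_0^{\mathcal{R}}(M)$ whose successive quotients are projective $K\poset$-modules; each such step splits, and an induction on the filtration length yields that $\res_0^{\mathcal{R}}(M)$ is projective. Part~(b) is dual: by Table~\ref{Tb:QHstructure} we have $\nabla_{\mathcal{L}}(0) = S_{\mathcal{L}}(0)$ and $\nabla_{\mathcal{L}}(k) = Q_{\mathcal{L}}(k)$ for $k \in \poset$, while $\res_0^{\mathcal{L}}$ annihilates $S_{\mathcal{L}}(0)$ (the simple at vertex $0$) and $\res_0^{\mathcal{L}}(Q_{\mathcal{L}}(k)) \cong Q_{K\poset}(k)$ by Remark~\ref{R:restrict}(a); applying the exact functor $\res_0^{\mathcal{L}}$ to a $\nabla$-filtration produces a filtration of $\res_0^{\mathcal{L}}(M)$ by injective $K\poset$-modules, and splitting each step shows $\res_0^{\mathcal{L}}(M)$ is injective.

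For the converse in~(a) I would induct on $\dim_K M$. If $\Tr_0(M) = 0$, write $M$ as a direct sum of indecomposables; each summand again has vanishing trace and projective restriction (a direct summand of a projective $K\poset$-module is projective), so Lemma~\ref{L:trace}(a) identifies each summand with a standard module, whence $M \in \mathcal{F}(\Delta_{\mathcal{R}})$. If $\Tr_0(M) \neq 0$, there is a nonzero, hence injective, morphism $S_{\mathcal{R}}(0) \to M$, so $M$ has a submodule $N \cong S_{\mathcal{R}}(0) = \Delta_{\mathcal{R}}(0)$; since $\res_0^{\mathcal{R}}(N) = 0$ and $\res_0^{\mathcal{R}}$ is exact, $\res_0^{\mathcal{R}}(M/N) \cong \res_0^{\mathcal{R}}(M)$ is still projective, so the induction hypothesis gives $M/N \in \mathcal{F}(\Delta_{\mathcal{R}})$, and closure of $\mathcal{F}(\Delta_{\mathcal{R}})$ under extensions (together with $N \cong \Delta_{\mathcal{R}}(0) \in \mathcal{F}(\Delta_{\mathcal{R}})$) yields $M \in \mathcal{F}(\Delta_{\mathcal{R}})$. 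The converse in~(b) runs identically after replacing $\mathcal{F}(\Delta_{\mathcal{R}})$, $S_{\mathcal{R}}(0) = \Delta_{\mathcal{R}}(0)$, ``projective'', Lemma~\ref{L:trace}(a), and ``standard'' by $\mathcal{F}(\nabla_{\mathcal{L}})$, $S_{\mathcal{L}}(0) = \nabla_{\mathcal{L}}(0)$, ``injective'', Lemma~\ref{L:trace}(b), and ``co-standard'', using that $\res_0^{\mathcal{L}}$ kills $S_{\mathcal{L}}(0)$ and that $\mathcal{F}(\nabla_{\mathcal{L}})$ is closed under extensions.

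The genuinely new ingredient is Lemma~\ref{L:trace}, which disposes of the base case $\Tr_0(M) = 0$; everything else is formal, namely exactness of idempotent restriction, the observation that a filtration by projectives (resp. injectives) over $K\poset$ splits, and closure of $\mathcal{F}(\Delta_{\mathcal{R}})$ and $\mathcal{F}(\nabla_{\mathcal{L}})$ under extensions. The point requiring care, and the main potential obstacle, is the bookkeeping at the vertex $0$: one must verify that $S_{\mathcal{R}}(0)$ and $S_{\mathcal{L}}(0)$ are annihilated by the idempotent restrictions and that they coincide with the standard and co-standard modules $\Delta_{\mathcal{R}}(0)$ and $\nabla_{\mathcal{L}}(0)$, so that copies of them may be peeled off without disturbing the restriction hypothesis while remaining inside the relevant filtered subcategory; all of this is already recorded in Table~\ref{Tb:QHstructure}, Remark~\ref{R:restrict} and~(\ref{Eq:restrict}).
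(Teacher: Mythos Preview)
Your proof is correct and follows essentially the same strategy as the paper: compute the restrictions of the (co-)standard modules, use exactness of $\res_0$ for the forward direction, and for the converse separate off the $S(0)$-part and invoke Lemma~\ref{L:trace} on the rest. The only organizational difference is that for the converse the paper quotients out the whole trace $\Tr_0(M)$ in one step (and then needs $\Ext^1(S(0),S(0))=0$ to ensure $\Tr_0(M/\Tr_0(M))=0$), whereas you peel off one copy of $S(0)$ at a time by induction on $\dim_K M$; your variant sidesteps that Ext computation at the cost of an induction, but the content is the same.
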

\begin{proof}
To show (a), observe from Table~\ref{Tb:QHstructure} that $\res_0^{\mathcal{R}}(\Delta(k)) \cong P_{K\poset}(k)$ for any $k \in \poset$, and $\res_0^{\mathcal{R}}(\Delta(0))=0$. Since $\res_0^{\mathcal{R}}$ is additive and exact, then $\res_0^{\mathcal{R}}(M)$ is projective for any $M$ in $\mathcal{F}(\Delta_{\mathcal{R}})=\Ind(A,\mathcal{R})$. To show the converse, assume that $M$ is a $\mathcal{R}$-module with projective $\res_0^{\mathcal{R}}(M)$. By the exactness of the functor $\res_0^{\mathcal{R}}$, since $\res^{\mathcal{R}}_0(\Tr_0(M))=0$ we have exact sequences
\[
\xymatrix@C=1pc{0 \ar[r] & \Tr_0(M) \ar[r] & M \ar[r] & M/\Tr_0(M) \ar[r] & 0} \qquad \text{and} \qquad
\xymatrix@C=1pc{0 \ar[r] & \res_0^{\mathcal{R}}(M) \ar[r] & \res_0^{\mathcal{R}}(M/\Tr_0(M)) \ar[r] & 0}.
\]
In particular, $\res_0^{\mathcal{R}}(M/\Tr_0(M))$ is projective. Note that $\Tr_0(M/\Tr_0(M))=0$ since $\Ext^1_{\mathcal{R}}(S(0),S(0))=0$. Therefore, by additivity and Lemma~\ref{L:trace}(a) we have that $M/\Tr_0(M) \in \add(\Delta_{\mathcal{R}})$, and clearly $\Tr_0(M) \in \add(\Delta_{\mathcal{R}}(0))$. Then $M \in \mathcal{F}(\Delta_{\mathcal{R}})=\Ind(A,\mathcal{R})$, as wanted.

For the proof of (b) observe that $\res_0^{\mathcal{L}}(\nabla(k))$ is an injective $K\poset$-module  for any $k$ in $\poset$. This follows from $\nabla(k)$ being an injective $\mathcal{L}$-module, see Table~\ref{Tb:QHstructure} and~\cite[Theorem~I.6.8]{ASS06}. Then, as before, $\res_0^{\mathcal{L}}(M)$ is an injective $K\poset$-module for any $\mathcal{L}$-module $M$ in $\mathcal{F}(\nabla_{\mathcal{L}})=\Cnd(A,\mathcal{L})$. The converse implication is as in case (a) using Lemma~\ref{L:trace}(b).
\end{proof}

\section{Comments and examples} \label{S(X)}

In this section we give bound quiver presentations and examples of the right and left algebras of the bocs associated to a finite poset.

\subsection{Bound quiver for the right algebra} \label{X1}

A bound quiver presentation for the incidence algebra $K\poset$ is simple and well-known: consider the \emph{Hasse quiver} $\Hasse(\poset)$ of a finite poset ${\poset}$, which has as vertices the set ${\poset}$ and there is an arrow $\nu_{ji}:i \to j$ for any $i$ and any immediate successor $j$ of $i$. Consider the ideal $I_{\poset}$ of $K\Hasse(\poset)$ generated by the difference of all parallel paths in $\Hasse(\poset)$. This is an admissible ideal of $K\Hasse(\poset)$, and there is an isomorphism of $K$-algebras
\begin{equation}\label{Eq:incidence2}
K\poset \cong K\Hasse(\poset)/I_{\poset},
\end{equation}
determined by taking the matrix $E_{ii}$ to the trivial path at vertex $i$, and the matrix $E_{ji}$ with $j$ a direct successor of $i$ to the corresponding arrow $\nu_{ji}$ in $\Hasse(\poset)$.  With this presentation it is easy to give a bound quiver for the right algebra associated to a finite poset.

\begin{definition}\label{D:RBQ}
Let $\poset$ be a finite poset with right algebra $\mathcal{R}$.
\begin{enumerate}[label={\textnormal{(\alph*)}},topsep=3px,parsep=0px]
 \item The Gabriel quiver $\Quiver^R=(\Quiver^R_0,\Quiver^R_1)$:
\[
\begin{NiceArray}{w{c}{3mm} w{c}{2.5mm} w{l}{20mm} w{l}{30mm}}
\RowStyle[cell-space-limits=5pt]{}
\Quiver^R_0 & := & \poset \cup \{*\} & \\
\Block{2-1}{\Quiver^R_1} & \Block{2-1}{:=}
& \nu_{ji}: i \to j, & \text{for $i \to j$ in $\poset'$} \\
 & & a_j:j \to *, &  \text{for $j \in \max(\poset)$}
\CodeAfter
\SubMatrix{\{}{2-3}{3-4}{\}}
\end{NiceArray}
\]
 \item Relations $R^R:= \{\text{all commutative relations}\}$.
 \end{enumerate}
Then $\mathcal{R} \cong \Quiver^R/I^R$ with $I^R:=\langle R^R \rangle$.
\end{definition}

\subsection{Bound quiver for the left algebra} \label{X1P}

To describe the left algebra, first we produce a bound quiver presentation of the row-balanced incidence algebra $K^{\ones}\poset$. Recall that we have fixed a semi-simple subalgebra $R^{\ones}=\bigoplus_{j \in \poset' \cup \{0\}}K\eps_j$ of $K^{\ones}\poset$ and a basis of its radical
\[
\rad(K^{\ones}\poset)=\bigoplus_{\substack{ i \prec j \\ i \neq m(j)}}KB_{ji}.
\] 
 A direct calculation shows that the product in $K^{\ones}\poset$ on the chosen basis is given for $i\prec j$ by
\begin{enumerate}[label={\textnormal{(\alph*)}},topsep=3px,parsep=0px]
\item $B_{ji}=\eps_jB_{ji}\eps_i$ if $i\in \poset'$ and $B_{ji}=\eps_jB_{ji}\eps_0$ if $i \in \min(\poset)$.
\item If $i \in \poset'$ then $B_{kj}B_{ji}=B_{ki}$.
\item If $j\prec k$ then $B_{kj}(B_{ji_1}-B_{ji_2})=B_{ki_1}-B_{ki_2}$.
\end{enumerate}

We need some further notions to describe the square radical $\rad^2(K^{\ones}\poset)$. 
\begin{enumerate}[start=4,label={\textnormal{(\alph*)}},topsep=3px,parsep=0px]
\item If $h,h' \in \min(\poset,j)$ we write $h \sim_j h'$ if there is $i\prec j$ such that $h,h' \in \min(\poset,i)$. Clearly, $\sim_j$ is a symmetric relation on $\min(\poset,j)$.

\item Extend $\sim_j$ reflexively and transitively to obtain an equivalence relation $\simeq_j$ on $\min(\poset,j)$. To be precise, $h \simeq_j h'$ if and only if either $h=h'$ or there are $h_0,h_1,\ldots,h_r \in \min(\poset,j)$ such that
\[
h=h_0 \sim_j h_1 \sim_j h_2 \sim_j \ldots  \sim_j h_r=h'.
\] 

\item Denote by $\Orb(j)$ the number of equivalence classes in $\min({\poset},j)/\simeq_j$, and choose a set $\wt{\ell}(j)$ of representatives of those class not containing $m(j)$.
\end{enumerate}
Then the rules (a-c) yield the following form of the square radical of $K^{\ones}\poset$,
\[
\rad^2(K^{\ones}\poset)=\left[ \bigoplus_{\substack{i\prec j \\ i \not\to j }}KB_{ji} \right] \oplus \bigoplus_{j \in \poset'}\left[ \left( \bigoplus_{\substack{i \simeq_j m(j) \\ i \neq m(j) }}KB_{ji}  \right) \oplus \bigoplus_{i' \in \wt{\ell}(j)} \left( \bigoplus_{\substack{i \simeq_j i' \\ i \neq i' }}K(B_{ji}-B_{ji'})  \right) \right],
\]
(recall that $B_{jm(j)}=0$ for any $j \in \poset'$). With the description of $\rad(K^{\ones}\poset)$ and $\rad^2(K^{\ones}\poset)$ we give the Gabriel quiver of $K^{\ones}\poset$ as follows, see~\cite{ASS06}. Consider the set of matrices
\[
\mathbb{B}:=\{B_{ji} \mid \text{$i \to j$ in $\poset'$}\} \cup \{ B_{ji} \mid \text{$j \in \poset'$ and $i \in \wt{\ell}(j)$} \}.
\]
These are linearly independent matrices in the radical of $K^{\ones}\poset$ whose projections are a $K$-basis of the quotient $\rad(K^{\ones}\poset)/\rad^2(K^{\ones}\poset)$. By (a), the Gabriel quiver of $K^{\ones}\poset$ can be obtained from the Hasse quiver $\Hasse(\poset')$ of the poset $\poset':=\poset-\min(\poset)$ by adding a vertex $0$ and exactly $\Orb(j)-1 \geq 0$ arrows from $0$ to $j$ for any $j \in \poset$. To be precise, consider the quiver $\Hasse^{\ones}(\poset):=(\poset' \cup \{0\},\mathbb{B},\sou,\tar)$ where $\tar(B_{ji}):=j$ and
\[
\sou(B_{ji}) := \left\{
\begin{array}{ll}
i, & \text{if $i \in \poset'$},\\
0, & \text{if $i\in \wt{\ell}(j)$}.
\end{array} \right.
\]
By construction, there is a surjective morphism of $K$-algebras $F:K\Hasse^{\ones}(\poset) \to K^{\ones}\poset$. By (b), all commutative relations in $\Hasse(\poset')$ belong to the kernel of $F$. The set of such relations is denoted by $R_{\poset'}$. Equation~(c) produces the following additional relations: for any full subposet $\poset''$ of $\poset$ as below, with $i_1,i_2 \in \min(\poset)$, we consider the relation $B_{kj_1}(B_{j_1i_1}-B_{j_1i_2})-B_{kj_2}(B_{j_2i_1}-B_{j_2i_2})$,
\begin{equation}\label{rel1}
\xymatrix@C=1pc@R=1pc{& k \ar@{-}[ld] \ar@{-}[rd] \\ j_1 \ar@{-}[d] \ar@{-}[rrd] && j_2 \ar@{-}[d] \ar@{-}[lld] \\
i_1 && i_2 }
\end{equation}
The set of such relations is denoted by $R^{\ones}_{\min(\poset)}$. All together, we get an isomorphism of $K$-algebras
\begin{equation}\label{Eq:Rowincidence2}
K^{\ones}\poset \cong K\Hasse^{\ones}(\poset)/I^{\ones}_{\poset}, \qquad \text{with $I^{\ones}_{\poset}:=\langle R_{\poset'} \cup R^{\ones}_{\min(\poset)} \rangle$.}
\end{equation}
The precise shape of the relations in $R^{\ones}_{\min(\poset)}$ in terms of the arrows of $\Hasse^{\ones}(\poset)$ is highly dependent on the choice of minimal marking $m:\poset \to \min(\poset)$ and representatives $\wt{\ell}(j)$.

\begin{definition}\label{D:LBQ}
Let $\poset$ be a finite poset with left algebra $\mathcal{L}$.
\begin{enumerate}[label={\textnormal{(\alph*)}},topsep=3px,parsep=0px]
 \item The Gabriel quiver $\Quiver^L=(\Quiver^L_0,\Quiver^L_1)$:
\[
\begin{NiceArray}{w{c}{3mm} w{c}{2.5mm} w{l}{2cm} w{l}{6cm}}
\RowStyle[cell-space-limits=5pt]{}
\Quiver^L_0 & := & \poset \cup \{0\} & \\
\Block{3-1}{\Quiver^L_1} & \Block{3-1}{:=}
& \nu_{ji}: i \to j, & \text{for $i \to j$ in $\poset'$ or $\{i\}=\{h \in \poset \mid h \prec j\}$} \\
 & & a_i:i \to 0, &  \text{for $i \in \min(\poset)$} \\
 & & b_{ji}:0 \to j, &  \text{for $j \in \poset'$ and $i \in \wt{\ell}(j)$} 
\CodeAfter
\SubMatrix{\{}{2-3}{4-4}{\}}
\end{NiceArray}
\]
 \item For $i \in \min(\poset,j)$ and $j \prec k$ take
 \[
\nu_{k,j} := \left\{
\begin{array}{ll}
e_j, & \text{if $k=j$},\\
\nu_{kj}, & \text{if $j \to k$},\\
\nu_{kj_1}\cdots \nu_{j_rj}, & \text{if $j \to j_r \to \ldots \to j_1 \to k$},
\end{array} \right. \qquad
p(j,i) := \left\{
\begin{array}{ll}
\nu_{ji}, & \text{if $\{i\}=\{h \in \poset \mid h \prec j\}$},\\
-b_{ki'}a_i, & \text{if $i=m(j)$ and $i' \in \wt{\ell}(j)$},\\
b_{ki}a_i, & \text{if $i \in \wt{\ell}(j)$}, \\
\emptyset, & \text{otherwise}.
\end{array} \right.
 \]
We also consider the following paths for $i \prec k$ in $\poset$, $\Paths(k,i):=\{\nu_{k,j}p(j,i) \mid \text{$i \prec j \preceq k$ and $p(j,i)\neq \emptyset$} \}$.

 \item Relations:
\[
\begin{NiceArray}{w{c}{7mm} w{c}{2.5mm} w{l}{6cm} w{l}{5cm}}
\RowStyle[cell-space-limits=5pt]{}
R_{\poset'} & := & \{ \text{all commutative relations in $\Hasse(\poset')$}\}; \\
R^{\ones}_{\min(\poset)} & := & \{b_{kj_1}(b_{j_1i_1}-b_{j_1i_2})-b_{kj_2}(b_{j_2i_1}-b_{j_2i_2})\} & \text{as in~(\ref{rel1})}; \\
\Block{2-1}{R^0_{\min(\poset)}} & \Block{2-1}{:=}
& (b_{ji}-b_{ji'})a_{m(j)}, & \text{for $i,i' \in \wt{\ell}(j)$} \\
 & & b_{ji}a_{i'}, &  \text{for $i' \neq m(j)$ and $i' \neq i \in \wt{\ell}(j)$;} \\
\RowStyle[cell-space-limits=5pt]{} R_{\Paths} & := & \{\gamma-\gamma' \mid \text{$\gamma,\gamma' \in \Paths(k,i)$ and $i \in \min(\poset,k)$} \}; 
\CodeAfter
\SubMatrix{\{}{3-3}{4-4}{\}}
\end{NiceArray} 
\] 
 \end{enumerate}
Then the left algebra $\mathcal{L}$ is Morita equivalent to $\Quiver^L/I^L$ with $I^L:=\langle R_{\poset'} \cup R^{\ones}_{\min(\poset)} \cup R^{0}_{\min(\poset)} \cup R_{\Paths} \rangle$.
\end{definition}

\subsection{Some examples} \label{X2}

First we consider a family of examples, trivial for our discussion.

\begin{remark}
The following are equivalent conditions for a finite poset $\poset$:
\begin{enumerate}[label={\textnormal{(\alph*)}},topsep=3px,parsep=0px]
 \item $\poset=\min(\poset)$, that is, $\poset$ is an \emph{anti-chain};
 \item the bocs $(A,U)$ associated to $\poset$ has exactly one grouplike element;
 \item the right algebra satisfies $\mathcal{R} \cong A$;
 \item the left algebra satisfies $\mathcal{L} \cong A$;
 \item the left algebra $\mathcal{L}$ is basic.
\end{enumerate}
\end{remark}

We also consider some cases with ``simple minimal part'', where the basic left algebra $\mathcal{L}'$ is isomorphic to an incidence algebra. 

\begin{remark}
The simple $\mathcal{L}$-module $S(0)$ is projective if and only if every connected component of $\poset$ has a minimum. In this case the left algebra is Morita equivalent to the incidence algebra $K\poset_{\min,0}$ of the poset
\[
 \poset_{\min,0}:=(\poset \sqcup \{0\},\preceq')
\]
where $\preceq'$ extends $\preceq$ by taking $i \prec 0$ for any $i \in \min(\poset)$.
\end{remark}

Next we give examples of small posets, with right and basic left algebra shown below the corresponding Hasse diagram following Definitions~\ref{D:RBQ} and~\ref{D:LBQ}. In all cases we take the minimal marking $m(j)=1$ for $j \notin \min(\poset)$, with exception of vertex $4$ in the second poset, where we must take $m(4)=2$. Note that in these examples all left algebras are hereditary.
\[
\xymatrix@!0@C=15pt@R=15pt{ & {}_3 \ar@{-}[ldd] \ar@{-}[rdd] \\ \\ {}_1 & & {}_2 }\qquad\qquad \qquad
\xymatrix@!0@C=15pt@R=15pt{ {}_3 \ar@{-}[dd] \ar@{-}[ddrr]  & & {}_4 \ar@{-}[dd] \\ \\ {}_1 & & {}_2 } \qquad \qquad \qquad
\xymatrix@!0@C=15pt@R=15pt{ {}_3 \ar@{-}[dd] \ar@{-}[ddrr]  & & {}_4 \ar@{-}[dd] \ar@{-}[ddll] \\ \\ {}_1 & & {}_2 } \qquad \qquad \qquad
\xymatrix@!0@C=15pt@R=15pt{ {}_4 \ar@{-}[rd] & & {}_5 \ar@{-}[dl] \\ & {}_3 \ar@{-}[rd] \ar@{-}[dl]  \\ {}_1 & & {}_2 }
\]
\[
\xymatrix@!0@C=15pt@R=25pt{ & {*} \\ & {}_3 \ar[u]^-{a_3} \ar@{<-}[ld]_-{\nu_{31}} \ar@{<-}[rd]^-{\nu_{32}} \\ {}_1 & & {}_2 }\qquad\qquad
\xymatrix@!0@C=15pt@R=25pt{ & {*} \\ {}_3 \ar@{}[rr]|(.6){=} \ar[ur]^{a_3} \ar@{<-}[d]_-{\nu_{31}} \ar@{<-}[drr]_(.75){\nu_{32}}  & & {}_4 \ar[ul]_-{a_2} \ar@{<-}[d]^-{\nu_{42}} \\ {}_1 & & {}_2 } \qquad \qquad
\xymatrix@!0@C=15pt@R=25pt{ & {*} \\ {}_3 \ar@{}[rr]|-{=} \ar[ur]^-{a_3} \ar@{<-}[d]_-{\nu_{31}} \ar@{<-}[drr]_(.95){\nu_{32}}  & & {}_4  \ar[ul]_-{a_4} \ar@{<-}[d]^-{\nu_{42}} \ar@{<-}[dll]^(.95){\nu_{41}} \\ {}_1 & & {}_2 } \qquad \qquad
\xymatrix@!0@C=18pt@R=18pt{ & {*} \\ {}_4 \ar@{}[rr]|-{=} \ar[ur]^-{a_4} \ar@{<-}[rd]_-{\nu_{43}} & & {}_5 \ar[ul]_-{a_5} \ar@{<-}[dl]^-{\nu_{53}} \\ & {}_3 \ar@{<-}[rd]^-{\nu_{32}} \ar@{<-}[dl]_-{\nu_{31}}  \\ {}_1 & & {}_2 }
\]
\[
\xymatrix@!0@C=15pt@R=25pt{ & {{}_3} \\ & \mathmiddlescript{0} \ar[u]^-{b_{32}} \ar@{<-}[ld]_-{a_1} \ar@{<-}[rd]^-{a_2} \\ {}_1 & & {}_2 }\qquad\quad \quad
\xymatrix@!0@C=15pt@R=25pt{ {}_3 \ar@{<-}[dr]_-{b_{32}}  & & {}_4  \ar@{<-}[dd]^-{\nu_{42}} \\ & \mathmiddlescript{0} \\ {}_1 \ar[ru]^-{a_1} & & {}_2 \ar[lu]^-{a_2} } \qquad \qquad \quad
\xymatrix@!0@C=15pt@R=25pt{ {}_3 \ar@{<-}[rd]_-{b_{32}} & & {}_4 \ar@{<-}[dl]^-{b_{42}} \\ & \mathmiddlescript{0} \ar@{<-}[rd]^-{a_2} \ar@{<-}[dl]_-{a_1}  \\ {}_1 & & {}_2 } \qquad \qquad \quad
\xymatrix@!0@C=18pt@R=18pt{ \quad{}_4 \ar@{<-}[rd]_-{\nu_{43}} & & {}_5 \ar@{<-}[dl]^-{\nu_{53}} \\ & {}_3 \ar@{<-}[d]_-{b_{32}} \\ & \mathmiddlescript{0} \ar@{<-}[rd]^-{a_2} \ar@{<-}[dl]_-{a_1}  \\ {}_1 & & {}_2 }
\]

\bigskip

\noindent {\bf Statements and declarations} \\
The authors have no competing interests to declare that are relevant to the content of this article.

\bibliographystyle{plainnat}

\end{document}